\newcommand\tOmega{\widetilde{\Omega}}
\newcommand\tGamma{\widetilde{\Gamma}}
\newcommand\tV{\widetilde{V}}
\newcommand\W{\mathbb{W}}
\newcommand\tu{\tilde{u}}
\newcommand\bu{\bar{u}}
\newcommand\hu{\hat{u}}
\newcommand\bl{\bar{\lambda}}
\newcommand\p{\mathbf{p}}
\renewcommand\P{\mathbf{P}}
\newcommand\intO{\int_{\Omega}}
\newcommand\intOs{\int_{\Omega_s}}
\newcommand\tE{\widetilde{E}}
\newcommand\N{\mathcal{N}}
\newcommand\TOL{\mathrm{TOL}}
\let\div\relax
\DeclareMathOperator\div{div}
\DeclareMathOperator\proj{proj}
\DeclareMathOperator\ed{ed}
\DeclareMathOperator\supp{supp}
\DeclareMathOperator*{\argmin}{\arg\min}
\numberwithin{equation}{section}
\spnewtheorem{proposition}[theorem]{Proposition}{\bfseries}{\itshape}
\spnewtheorem{lemma}[theorem]{Lemma}{\bfseries}{\itshape}
\spnewtheorem{corollary}[theorem]{Corollary}{\bfseries}{\itshape}
\spnewtheorem{remark}[theorem]{Remark}{\itshape}{\rmfamily}
\spnewtheorem{definition}[theorem]{Definition}{\bfseries}{\rmfamily}
\journalname{arXiv}
\begin{document}

\title{An Overlapping Domain Decomposition Framework without Dual Formulation for Variational Imaging Problems}
\titlerunning{Overlapping Domain Decomposition for Imaging}
\author{Jongho Park}
\institute{Jongho Park \at
              Department of Mathematical Sciences, KAIST, Daejeon 34141, Korea \\
              \email{jongho.park@kaist.ac.kr}
}

\date{Received: date / Accepted: date}

\maketitle

\begin{abstract}
In this paper, we propose a novel overlapping domain decomposition method that can be applied to various problems in variational imaging such as total variation minimization.
Most of recent domain decomposition methods for total variation minimization adopt the Fenchel--Rockafellar duality, whereas the proposed method is based on the primal formulation.
Thus, the proposed method can be applied not only to total variation minimization but also to those with complex dual problems such as higher order models.
In the proposed method, an equivalent formulation of the model problem with parallel structure is constructed using a custom overlapping domain decomposition scheme with the notion of essential domains.
As a solver for the constructed formulation, we propose a decoupled augmented Lagrangian method for untying the coupling of adjacent subdomains.
Convergence analysis of the decoupled augmented Lagrangian method is provided.
We present implementation details and numerical examples for various model problems including total variation minimizations and higher order models.
\keywords{Domain decomposition method \and Augmented Lagrangian method \and Variational imaging \and Total variation \and Higher order models}
\subclass{49M27 \and 65K10 \and 65N55 \and 65Y05 \and 68U10}
\end{abstract}

\section{Introduction}
\label{Sec:Introduction}
Most problems of the variational approach to image processing have the form of
\begin{equation}
\label{general}
\min_{u} \left\{ E (u) := F(Au) + R(u) \right\}.
\end{equation}
Here, $F(Au)$ is a fidelity term which measures a distance between the given image $f$ and a solution $u$.
The linear operator $A$ is determined by the type of the problem~\cite{CW:1998,ROF:1992,SC:2002}.
We use $A = I$ for the image denoising problem~\cite{ROF:1992}.
Image denoising problems are converted to image inpainting problems when we set $A$ by the restriction operator to the subset corresponding to the known part of the image~\cite{SC:2002}.
For the image deconvolution problem, $A$ models a blur kernel~\cite{CW:1998}.
 The functional $F$ is usually given by a norm of the difference between $Au$ and $f$.
 In the case of image denoising, $L^2$-norm is adopted to catch Gaussian noise~\cite{ROF:1992}, while $L^1$-norm is used when the image is corrupted by impulse noise~\cite{CE:2005,Nikolova:2004}.
In addition, there are variational denoising models with specific norms to treat various types of noise, for example, see~\cite{AA:2008,LCA:2007}.  
 
 On the other hand, $R(u)$ plays a role of a regularizer which resolves the ill-posedness of the problem and enforces the regularity of the solution.
 The most primitive is the $H^1$-regularization proposed by Tikhonov~\cite{Tikhonov:1963}, where $R(u)$ is given by the $H^1$-seminorm of $u$.
 To preserve edges or discontinuities of the image, a class of nonsmooth regularizers has been considered.
 The famous Rudin--Osher--Fatemi~(ROF) model which uses total variation as a regularizer for image denoising was proposed in~\cite{ROF:1992}, and it successfully removes Gaussian noise while it preserves edges of the image.
Since a solution of the total variation minimization problem is piecewise constant in general, it causes the staircase effect on the resulting image.
To avoid such a situation, higher order regularizers which are expressed in terms of higher order derivatives of $u$ have been proposed in numerous literature~\cite{CL:1997,LLT:2003}.

The imaging problems introduced above are nonseparable in general; see~\cite[Assumption~3.1]{LP:2019} for the definition of the nonseparability.
To be more precise, suppose that the image domain $\Omega$ is partitioned into nonoverlapping subdomains $\{ \Omega_s \}$.
Then the energy functional of the imaging problem defined on the full domain $\Omega$ cannot be expressed as the sum of local energy functionals defined on subdomains $\Omega_s$, i.e., there do not exist local energy functionals $\{ E_s \}$ such that
\begin{equation*}
E(u) = \sum_{s=1}^{\N} E_s (u|_{\Omega_s}).
\end{equation*}
For example, the total variation regularizer proposed in~\cite{ROF:1992} is nonseparable since it measures the jump of a function across the subdomain interfaces.
On the other hand, fidelity terms of image deconvolution problems are nonseparable due to the nonlocal nature of the convolution.
Due to the nonseparability, it has been considered as a difficult problem to design efficient block methods or domain decomposition methods~(DDMs) for imaging problems.
Indeed, it was shown in~\cite{LN:2017} that the usual block relaxation methods such as Jacobi and Gauss--Seidel applied to the ROF model are not guaranteed to converge to a minimizer.

The purpose of this paper is to introduce a novel convergent DDM for a family of problems of the form~\eqref{general}.
In DDMs, the domain of the problem is decomposed into either overlapping or nonoverlapping subdomains.
Then, we decompose the full-dimension problem into smaller dimension problems on subdomains, called local problems.
Since the local problems can be solved in parallel, DDMs implemented on distributed memory computers are efficient ways to treat large scale images.

There have been numerous researches on DDMs for a particular case of~\eqref{general}: total variation minimization.
Subspace correction methods for the ROF model
\begin{equation}
\label{ROF_intro}
\min_{u \in BV(\Omega)} \frac{\alpha}{2} \intO (u-f)^2 \,dx + TV(u)
\end{equation}
were considered in several papers; see, e.g.,~\cite{FS:2009}, where $BV(\Omega)$ is a set of functions with bounded variation in $\Omega$ and $TV(u)$ is the total variation of $u$.
However, as we mentioned above, those methods may converge to a wrong solution due to the nonseparability of $TV(u)$~\cite{LN:2017}.
To overcome such difficulties, a number of recent papers~\cite{CTWY:2015,LPP:2019} dealt with the Fenchel--Rockafellar dual problem of~\eqref{ROF_intro} given by
\begin{equation}
\label{dual_ROF_intro}
\min_{\p \in (C_0^1 (\Omega))^2} \frac{1}{2\alpha} \intO (\div \p + \alpha f )^2 \,dx \quad \textrm{ subject to } \| \p \|_{\infty} \leq 1
\end{equation}
instead of the original~(primal) one.
In the case of~\eqref{dual_ROF_intro}, the constraint $\| \p \|_{\infty} \leq 1$ can be treated separately in each subdomain.
Moreover, the solution space $(C_0^1 (\Omega))^2$ has good regularity so that it is able to impose appropriate boundary conditions to local problems in subdomains.
With these advantages, iterative substructuring methods for~\eqref{dual_ROF_intro} were proposed in~\cite{LPP:2019}, and one of them was generalized for general total variation minimization in~\cite{LP:2019}.

However, the dual approach is not adequate to apply to the general variational problem~\eqref{general}.
First, it is hard to obtain an explicit formula for the Fenchel--Rockafellar dual formulation of~\eqref{general}.
There are researches on the dual formulations for particular cases of~\eqref{general}; see~\cite{Chambolle:2004,DHN:2009} for instance.
We also note that the duality based DDM proposed in~\cite{LP:2019} cannot be applied to problems with nonseparable fidelity terms like image deconvolution problems even if their energy functionals are convex.

In this paper, we propose an overlapping domain decomposition framework that does not rely on the Fenchel--Rockafellar duality.
The proposed framework has very wide range of applications.
It accommodates almost all total variation-regularized problems containing ones with nonseparable fidelity terms.
With a little modification, it can be applied to problems with higher order regularizers such as~\cite{LLT:2003}. 

The proposed framework is constructed by using the notion of \textit{essential domains}.
First, the image domain $\Omega$ is partitioned into nonoverlapping subdomains $\left\{ \Omega_s \right\}$.
For each $\Omega_s$, there exists a slightly larger subset $\tilde{\Omega}_s$ of $\Omega$ such that the computation of the local energy functional $E_s (u)$ on $\Omega_s$ requires only the values of $u$ on $\tilde{\Omega}_s$.
We call the minimal $\tilde{\Omega}_s$ the essential domain for $E_s$.
Then, $\{ \tilde{\Omega}_s \}$ forms an overlapping domain decomposition of $\Omega$ and we can construct an equivalent constrained minimization problem with a parallel structure.
The proposed formulation can be regarded as a generalization of~\cite{DCT:2016} in the sense that we get exactly the same formulation as in~\cite{DCT:2016} if we apply the proposed framework to the convex Chan--Vese model~\cite{CV:2001} for image segmentation.

While \cite{DCT:2016} adopts the first order primal-dual algorithm~\cite{CP:2011} to solve the resulting equivalent minimization problem, we use a version of the augmented Lagrangian method~\cite{Hestenes:1969}.
As it is well-known that the penalty term appearing in the augmented Lagrangian method couples local problems in adjacent subdomains~\cite{LP:2009}, we propose a \textit{decoupled augmented Lagrangian method} which guarantees parallel computation of local problems.
Differently from the conventional augmented Lagrangian method, a modified penalty term which does not couple adjacent local problems is used in the decoupled augmented Lagrangian method.
Convergence analysis of the proposed method can be done in a similar way as the conventional analysis for the alternating direction method of multipliers given in~\cite{HY:2015,WT:2010}.
Numerical experiments ensure that the proposed method outperforms the existing methods~\cite{DCT:2016,LNP:2019} for various imaging problems.

We summarize the main advantages of this paper in the following.
\begin{itemize}
\item Since the proposed method does not utilize Fenchel--Rockafellar duality, it has wide range of applications; it can be applied to problems with complex dual problems.
\item The proposed method is suitable to implement on distributed memory computers.
Moreover, it is easy to program the proposed method since there is no data structures lying on the subdomain interfaces, which make parallel implementation hard.
\item While almost of the existing works with convergence guarantee use the dual approach, with the novel overlapping domain decomposition framework using essential domains, convergence to a correct minimizer is guaranteed without the dual approach.
\end{itemize}

The rest of the paper is organized as follows. 
In Section~\ref{Sec:DD}, we introduce the notion of essential domains and propose an overlapping domain decomposition framework.
A decoupled augmented Lagrangian method to solve the domain decomposition formulation is presented in Section~\ref{Sec:ALM}.
We apply the proposed DDM to several variational models in image processing, including total variation minimizations and higher order models in Sections~\ref{Sec:Applications}.
We conclude the paper with remarks in Section~\ref{Sec:Conclusion}.

\section{Domain decomposition framework}
\label{Sec:DD}
In this section, we briefly state a discrete setting for~\eqref{general} first.
Then, an overlapping domain decomposition framework using the notion of essential domains is introduced.
Throughout this paper, for the generic $n$-dimensional Hilbert space $H$ and $1 \leq p < \infty$, the $p$-norm of $v \in H$ is denoted by $\| v \|_{p, H}$ and the Euclidean inner product of $v, w \in H$ by $\langle v, w \rangle_H$.
The subscript $H$ can be deleted if there is no ambiguity.
The dual space of $H$ consisting of all linear functionals on $H$ is denoted by $H^*$.
Let $J_H$:~$H \rightarrow H^*$ be the Riesz isomorphism from $H$ to $H^*$, i.e.,
\begin{equation*}
(J_H u)(v) = \langle u, v \rangle_H , \hspace{0,5cm} u, v \in H.
\end{equation*}
By identifying $H$ with its double dual $H^{**}$, we have $J_{H^*} = (J_H)^*$ and $J_{H^*} J_H = I$.

Consider a grayscale image of the resolution $M \times N$.
We regard each pixel in the image as a discrete point, i.e., the image domain~$\Omega$ consists of $M \times N$ discrete points.
Let $V$ be the collection of all functions from $\Omega$ to $\mathbb{R}$.
In this setting, the discrete integration of $v \in V$ is naturally evaulated as
\begin{equation*}
\intO v \, dx = \sum_{(i,j) \in \Omega} v_{ij}.
\end{equation*}
The linear operator $A$ in~\eqref{general} can be regarded as a linear operator on $V$.

We assume that the energy functional in~\eqref{general} has the discrete integral structure, that is, there exists an operator $T$:~$V \rightarrow V$ such that
\begin{equation}
\label{integral}
E (u) = \intO T (u) \, dx.
\end{equation}
This assumption is reasonable since most of popular variational models in image processing are of this form.
For example, for the discrete ROF model introduced in~\cite{Chambolle:2004}, we have
\begin{equation*}
T (u) = \frac{\alpha}{2} (u - f) ^2 + | \nabla^+ u |,
\end{equation*}
where $\alpha$ is a positive parameter and $\nabla^+$ is the discrete gradient operator which will be defined rigorously in Section~\ref{Sec:Applications}.

First, we consider a nonoverlapping domain decomposition of $\Omega$.
The image domain $\Omega$ is decomposed into $\N$ disjoint rectangular subdomains $\left\{ \Omega_s \right\}_{s=1}^{\N}$.
One can consider the (nonoverlapping) local function space $V_s$ on $\Omega_s$ as
\begin{equation}
\label{V_s}
V_s = \left\{ v \in V : \supp v \subset \Omega_s \right\}.
\end{equation}
To construct an overlapping domain decomposition which is suitable for parallel computation, we introduce the notion of essential domains.

\begin{definition}
Let $D \subset \Omega$ and $T$: $V \rightarrow V$.
The \emph{essential domain} of $T$ on $D$, denoted by $\ed_D (T)$, is defined as the minimal subset $\tilde{D}$ of $\Omega$ such that
$T(u)|_{D}$ can be expressed with the values of $u|_{\tilde{D}}$ only.
\end{definition}
\noindent
Examples of essential domain will be presented in Section~\ref{Sec:Applications}. 

We define the local energy functionals $E_s$: $V \rightarrow \mathbb{R}$ as
\begin{equation*}
E_s (u) = \intOs T (u) \, dx.
\end{equation*}
In the computation of $E_s (u)$, we need the values of $u$ not on the entire domain $\Omega$ but on $\ed_{\Omega_s} (T)$.
We set
\begin{equation*}
\tOmega_s = \ed_{\Omega_s} (T).
\end{equation*}
Then, $\{ \tOmega_s \}_{s=1}^{\N}$ forms an overlapping domain decomposition of $\Omega$.
Since $E_s (u) = E_s (u|_{\tOmega_s})$, we have
\begin{equation*}
E (u) = \sum_{s=1}^{\N} E_s (u) = \sum_{s=1}^{\N} E_s (u|_{\tOmega_s}).
\end{equation*}

The (overlapping) local function space $\tV_s$ for the subdomain $\tOmega_s$ is defined as
\begin{equation*}
\tV_s = \left\{ v \in V : \supp v \subset \tOmega_s \right\},
\end{equation*}
and
\begin{equation*}
\tV = \bigoplus_{s=1}^{\N} \tV_s.
\end{equation*}
It is easy to observe that $V_s \subset \tV_s$.
Also, we define the thick interface $\tGamma_{st} = \tOmega_s \cap \tOmega_t$ for $s<t$ and $\tGamma = \bigoplus_{s<t} \tGamma_{st}$.
Let $\tV_{\tGamma}$ be the collection of all functions from $\tGamma$ to $\mathbb{R}$.
Take $\tu_s \in \tV_s$ for all $s$ and let $\tu = \bigoplus_{s=1}^{\N} \tu_s \in \tV$.
If $\tu_s = \tu_t$ on $\tGamma_{st}$ for all $s<t$, then $\tu$ can be considered as an element of $V$, i.e., $\tu \in V$.
In this case, the ``splitted'' energy functional $\tE$:~$\tV \rightarrow \mathbb{R}$ defined by
\begin{equation*}
\tE (\tu) = \sum_{s=1}^{\N} E_s (\tu_s), \hspace{0.5cm} \tu = \bigoplus_{s=1}^{\N} \tu_s \in \tV
\end{equation*}
agrees with $E (\tu)$.
It motivates the jump operator $B$: $\tV \rightarrow \tV_{\tGamma}^*$ to be defined as
\begin{equation*}
B\tu |_{\tGamma_{st}} := \tu_s |_{\tGamma_{st}} - \tu_t |_{\tGamma_{st}}, \hspace{0.5cm} s<t.
\end{equation*}
Then we conclude that minimizing $E$ over $V$ is equivalent to minimizing $\tE$ over $\ker B \subset \tV$.
We summarize this fact in the following theorem.

\begin{theorem}
\label{Thm:DD_equiv}
Let $\tu^* \in \tV$ be a solution of the constrained minimization problem
\begin{equation}
\label{DD}
\min_{\tu \in \tV} \tE(\tu) \hspace{0.5cm} \textrm{subject to } B\tu = 0.
\end{equation}
Then, we have $\tu^* \in V$, which is a solution of the minimization problem
\begin{equation*}
\min_{u \in V} E(u).
\end{equation*}
\end{theorem}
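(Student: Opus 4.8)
The plan is to exploit the two facts already established in the text: first, that $\tE$ and $E$ agree whenever the pieces match on the thick interfaces, and second, that the matching condition is exactly $B\tu = 0$. Concretely, I would argue that the constrained problem \eqref{DD} and the problem $\min_{u \in V} E(u)$ have the same feasible value and corresponding minimizers, via the natural identification between $V$ and $\ker B \subset \tV$.

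First I would set up the identification map. Given $u \in V$, define $\iota(u) = \bigoplus_{s=1}^{\N} (u|_{\tOmega_s}) \in \tV$; since $\{\tOmega_s\}$ covers $\Omega$, this is well defined, and since the restrictions agree on each $\tGamma_{st} = \tOmega_s \cap \tOmega_t$, we have $B(\iota(u)) = 0$, so $\iota(u) \in \ker B$. Conversely, if $\tu = \bigoplus_s \tu_s \in \ker B$, then $\tu_s = \tu_t$ on $\tGamma_{st}$ for all $s < t$, so the pieces glue to a single function in $V$ (this is precisely the observation made just before the statement of the theorem, that such a $\tu$ "can be considered as an element of $V$"); call it $\gamma(\tu)$. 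These two maps are mutually inverse, so $\iota$ is a bijection between $V$ and $\ker B$. Next I would check the energies match: by definition $\tE(\iota(u)) = \sum_{s=1}^{\N} E_s(u|_{\tOmega_s}) = \sum_{s=1}^{\N} E_s(u) = E(u)$, using $E_s(u) = E_s(u|_{\tOmega_s})$ (valid because $\tOmega_s = \ed_{\Omega_s}(T)$) and the additivity $E(u) = \sum_s E_s(u)$ coming from the discrete integral structure \eqref{integral} and the fact that $\{\Omega_s\}$ partitions $\Omega$. Hence minimizing $\tE$ over $\ker B$ is literally the same optimization problem as minimizing $E$ over $V$, transported along the bijection $\iota$.

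Finally I would conclude: a minimizer $\tu^*$ of \eqref{DD} is by feasibility an element of $\ker B$, hence $\tu^* = \iota(u^*)$ for $u^* = \gamma(\tu^*) \in V$, and for any $u \in V$ we have $E(u) = \tE(\iota(u)) \geq \tE(\tu^*) = E(u^*)$, so $u^*$ minimizes $E$ over $V$; and since $\tu^* = \iota(u^*)$ is the gluing of a genuine function, $\tu^* \in V$ in the stated sense.

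The only place requiring care — and the main (mild) obstacle — is making the statements "$\tu \in \ker B$ glues to an element of $V$" and "$\tu^* \in V$" precise, since $\tV$ and $V$ are formally different spaces and the inclusion $V \hookrightarrow \tV$ is through $\iota$; one must be explicit that $B\tu = 0$ means the $\tu_s$ agree on all pairwise overlaps $\tGamma_{st}$, and that pairwise agreement on a cover suffices to define a global function (which is immediate here because membership in $\tOmega_s$ for the various $s$ is the only thing that matters, and any two pieces containing a given pixel agree on it). Everything else is bookkeeping with the definitions of $E_s$, $\ed$, and $B$.
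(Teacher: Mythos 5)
Your proposal is correct and follows essentially the same route as the paper, which states the theorem as a summary of the discussion immediately preceding it (the gluing of $\ker B$ elements into $V$ and the agreement $\tE(\tu)=E(\tu)$ on $\ker B$) rather than giving a separate proof. You have simply made the bijection $\iota$ between $V$ and $\ker B$ and the identity $\tE\circ\iota = E$ explicit, which is a faithful elaboration of the paper's argument.
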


\section{Decoupled augmented Lagrangian method}
\label{Sec:ALM}
In this section, we discuss how to design a subdomain-level parallel algorithm to solve~\eqref{DD}.
First, the augmented Lagrangian formulation can be considered to handle the constraint $B \tu = 0$ in~\eqref{DD}:
\begin{equation}
\label{ALM_old}
\min_{\tu \in \tV} \max_{\mu \in \tV_{\tGamma}^*} \left\{ \tE(\tu) + \langle B\tu, \mu \rangle_{\tV_{\tGamma}^*} + \frac{\eta}{2} \| B \tu \|_{2, \tV_{\tGamma}^*}^2  \right\},
\end{equation}
where $\mu \in \tV_{\tGamma}^*$ is a Lagrange multiplier and $\eta >0$ is a penalty parameter.
The $\tu$-subproblem in the augmented Lagrangian formulation~\eqref{ALM_old} reads as follows:
\begin{equation}
\label{tu1}
\tu^{(n+1)} \in \argmin_{\tu \in \tV} \left\{ \tE (\tu) +\langle B\tu, \mu^{(n)} \rangle_{\tV_{\tGamma}^*} + \frac{\eta}{2} \| B \tu \|_{2, \tV_{\tGamma}^*}^2 \right\}.
\end{equation}
In~\eqref{tu1}, due to the penalty term $\frac{\eta}{2} \| B \tu \|_{2, \tV_{\tGamma}^*}^2$, local problems in subdomains are coupled so that they cannot be solved independently.
Therefore, \eqref{tu1} is not adequate for subdomain-level parallel computation.
Such a phenomenon was previously observed in~\cite{LP:2009}.
To resolve this difficulty, first, we replace~\eqref{DD}  by an equivalent one:
\begin{equation}
\label{DD_modified}
\min_{\tu \in \tV} \tE(\tu) \hspace{0.5cm} \textrm{subject to } (I-P_B)\tu = 0,
\end{equation}
where $P_B$: $\tV \rightarrow \tV$ is the orthogonal projection onto $\ker B$.
We note that computation of $P_B$ does not require explicit assembly of the matrix for $P_B$.
One can easily check that if $(i, j) \in \Omega$ is shared by $k$ (overlapping) subdomains, then $(P_B \tu)_{ij}$ is the average of $(\tu_s)_{ij}$ in the $k$ overlapping subdomains.
The $\tu$-subproblem in the augmented Lagrangian method for~\eqref{DD_modified} is the same as~\eqref{tu1} except that $B\tu \in \tV_{\tGamma}^*$ is replaced by $J_{\tV} (I-P_B) \tu \in \tV^*$:
\begin{equation}
\label{tu2}
\tu^{(n+1)} \in \argmin_{\tu \in \tV} \left\{ \tE (\tu) + \langle J_{\tV} (I - P_B) \tu, \lambda^{(n)} \rangle_{\tV^*} + \frac{\eta}{2} \| (I -P_B) \tu \|_{2, \tV}^2 \right\}.
\end{equation}
Note that the Lagrange multiplier $\lambda^{(n)}$ in~\eqref{tu2} is in $\tV^*$, while $\mu^{(n)}$ in~\eqref{tu1} is in $\tV_{\tGamma}^*$.
Next, we replace $(I - P_B) \tu$ in the penalty term in~\eqref{tu2} by $\tu - P_B \tu^{(n)}$:
\begin{equation}
\label{tu3}
\tu^{(n+1)} \in \argmin_{\tu \in \tV} \left\{ \tE (\tu) + \langle J_{\tV} (I - P_B) \tu, \lambda^{(n)} \rangle_{\tV^*} + \frac{\eta}{2} \| \tu - P_B \tu^{(n)} \|_{2, \tV}^2 \right\}.
\end{equation}
To further simplify the resulting algorithm, we may discard $I-P_B$ in the inner product term in~\eqref{tu3} with the assumption that $J_{\tV^*} \lambda^{(n)} \in  (\ker B)^{\bot}$ for all $n \geq 0$.
We will see in Proposition~\ref{Prop:kerB} that such an assumption is convincing.
Now, we have
\begin{equation}
\label{tu_final}
\tu^{(n+1)} \in \argmin_{\tu \in \tV} \left\{ \tE (\tu) + \langle J_{\tV} \tu, \lambda^{(n)} \rangle_{\tV^*} + \frac{\eta}{2} \| \tu - P_B \tu^{(n)} \|_{2, \tV}^2 \right\}.
\end{equation}
Then, the local problems of~\eqref{tu_final} are decoupled in the sense that a solution of~\eqref{tu_final} is obtained by assembling the solutions of $\N$ independent local problems in the subdomains.
Indeed, we have $\tu^{(n+1)} = \bigoplus_{s=1}^{\N} \tu_s^{(n+1)}$, where
\begin{equation}
\label{local}
\tu_s^{(n+1)} \in \argmin_{\tu_s \in \tV_s} \left\{ E_s (\tu_s ) + \langle J_{\tV_s} \tu_s , \lambda_s^{(n)} \rangle_{\tV_s ^*} + \frac{\eta}{2} \| \tu_s - (P_B \tu^{(n)})_s \|_{2, \tV_s}^2 \right\} .
\end{equation}
For the sake of convenience,~\eqref{local} is rewritten as
\begin{equation}
\label{local2}
\tu_s^{(n+1)} \in \argmin_{\tu_s \tV_s} \left\{ E_s (\tu_s ) + \frac{\eta}{2} \| \tu_s - \hu_s^{(n+1)} \|_{2, \tV_s}^2 \right\},
\end{equation}
where
\begin{equation*}
\hu_s^{(n+1)} = (P_B \tu^{(n)})_s - \frac{J_{\tV_s^*} \lambda_s^{(n)}}{\eta}.
\end{equation*}
In summary, we propose a decoupled augmented Lagrangian method for~\eqref{DD_modified} in Algorithm~\ref{Alg:DD}.

\begin{algorithm}[]
\caption{Decoupled augmented Lagrangian method for \eqref{DD_modified}}
\begin{algorithmic}[]
\label{Alg:DD}
\STATE Choose $\eta > 0$.
Let $\tu^{(0)} \in \tV$ and $J_{\tV^*} \lambda^{(0)} \in (\ker B)^{\bot}$.
\FOR{$n=0,1,2,\dots$}
\FOR{$s=1,\dots, \N$ \textbf{ in parallel}}
\STATE $\displaystyle \hu_s^{(n+1)} = (P_B \tu^{(n)})_s - \frac{J_{\tV_s^*} \lambda_s^{(n)}}{\eta} $
\STATE $\displaystyle \tu_s^{(n+1)} \in \argmin_{\tu_s \in \tV_s} \left\{ E_s (\tu_s ) + \frac{\eta}{2} \| \tu_s - \hu_s^{(n+1)} \|_{2, \tV_s}^2 \right\}$
\ENDFOR
\STATE $\displaystyle \tu^{(n+1)} = \bigoplus_{s=1}^{\N} \tu_s^{(n+1)}$
\STATE $\lambda^{(n+1)} = \lambda^{(n)} + \eta J_{\tV} (I - P_B ) \tu^{(n+1)}$
\ENDFOR
\end{algorithmic}
\end{algorithm}

In Algorithm~\ref{Alg:DD}, the only step that requires communication among subdomains is the computation of $P_B$.
As we noticed above, implementation of $P_B$ is easy because it is simple pointwise averaging.
If $(i,j) \in \Omega$ is shared by $k$ subdomains, then addition of $k$ scalars followed by division by $k$ is required to compute $P_B$ at $(i,j)$.
In addition, data communication among $k$ subdomains is needed.
All the other steps of Algorithm~\ref{Alg:DD} can be done independently and at the same time in each subdomain.

\begin{remark}
\label{Rem:Riesz}
In the implementation of Algorithm~\ref{Alg:DD}, it is convenient to identify Euclidean spaces with their dual spaces.
Then, Riesz isomorphisms $J_{\tV}$ and $J_{\tV_s}$ become the identity operators.
\end{remark}

\begin{remark}
\label{Rem:local}
Since the term $\frac{\eta}{2}  \| \tu_s - \hu_s^{(n+1)} \|_{2, \tV_s}^2$ in~\eqref{local2} is $\eta$-strongly convex, faster algorithms which utilize the strong convexity of the energy functional can be used.
Similar observations were made in~\cite{LNP:2019,LPP:2019}.
For instance, if the full-dimension problem~\eqref{general} can be solved by the $O(1/n)$-convergent primal-dual algorithm~\cite[Algorithm~1]{CP:2011}, then we can apply the $O(1/n^2)$-convergent one~\cite[Algorithm~2]{CP:2011} to~\eqref{local2} with little modification.
Details will be given in Section~\ref{Sec:Applications}.
\end{remark}

\begin{remark}
\label{Rem:overlapping}
Even though we have assumed that the domain decomposition $\{ \Omega_s \}_{s=1}^{\mathcal{N}}$ is nonoverlapping, it is also possible to construct a decoupled augmented Lagrangian method corresponding to the case of general overlapping domain decomposition.
However, in that case, the computation and communication costs for $P_B$ becomes larger, which may cause a bottleneck in parallel computation.
\end{remark}

Under the assumption that $\tE (\tu)$ is convex, one can obtain several desired convergence properties for Algorithm~\ref{Alg:DD}.
We summarize the convergence theorems for Algorithm~\ref{Alg:DD} in Theorems~\ref{Thm:global} and~\ref{Thm:rate}.
Theorem~\ref{Thm:global} ensures the global convergence of the method and Theorem~\ref{Thm:rate} presents the convergence rate.
The proofs of those theorems can be obtained by similar arguments as~\cite{HY:2015,WT:2010}, and will be presented in Appendix~\ref{App:analysis} for the sake of completeness.

\begin{theorem}
\label{Thm:global}
Assume that $\tE$ is convex. 
Then the sequence $\{ (\tu^{(n)} , \lambda^{(n)}) \}$ generated by Algorithm~\ref{Alg:DD} converges to a critical point of the saddle point problem
\begin{equation}
\label{saddle_pb}
\min_{\tu \in \tV} \max_{\lambda \in \tV^*} \left\{ \tE(\tu) + \langle J_{\tV} (I-P_B )\tu, \lambda \rangle + \frac{\eta}{2} \| (I -P_B ) \tu \|_{2}^2 \right\}. 
\end{equation}
\end{theorem}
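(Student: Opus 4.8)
The plan is to recognize Algorithm~\ref{Alg:DD} as an instance of the alternating direction method of multipliers (ADMM) applied to a suitably reformulated problem, and then to invoke (a careful adaptation of) the standard convergence theory for ADMM on convex problems as developed in~\cite{HY:2015,WT:2010}. The key observation is that the decoupled penalty term $\frac{\eta}{2}\|\tu - P_B\tu^{(n)}\|_2^2$, although it is not the ``natural'' augmented Lagrangian penalty for the constraint $(I-P_B)\tu = 0$, can be interpreted by introducing an auxiliary variable. Concretely, I would introduce $\tw \in \tV$ and rewrite~\eqref{DD_modified} as the two-block problem $\min_{\tu, \tw} \{\tE(\tu) + \chi_{\ker B}(\tw)\}$ subject to $\tu - \tw = 0$, where $\chi_{\ker B}$ is the indicator function of $\ker B$. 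The augmented Lagrangian is $\tE(\tu) + \chi_{\ker B}(\tw) + \langle J_{\tV}(\tu - \tw),\lambda\rangle + \frac{\eta}{2}\|\tu - \tw\|_2^2$. The $\tw$-update minimizes over $\ker B$, which is exactly the orthogonal projection, giving $\tw^{(n)} = P_B\tu^{(n)}$ (using $J_{\tV^*}\lambda^{(n)} \in (\ker B)^\perp$ so the linear term drops out on $\ker B$); the $\tu$-update is then precisely~\eqref{tu_final}; and the multiplier update matches the last line of Algorithm~\ref{Alg:DD}. This identification is the conceptual heart of the argument.

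Once the ADMM structure is in place, the remaining steps follow the classical template. First I would establish the variational inequalities characterizing each subproblem's optimality (using convexity of $\tE$ and of $\chi_{\ker B}$), combine them with the multiplier update, and derive the standard one-step contraction estimate: a quantity of the form $\|\tu^{(n)} - \tu^\star\|^2 + \frac{1}{\eta^2}\|\lambda^{(n)} - \lambda^\star\|^2$ (possibly with an additional $\|\tw^{(n)}-\tw^\star\|^2$ term, or with $P_B\tu$ in place of $\tw$) is monotonically nonincreasing along the iterates, with the decrement controlled by $\|\tu^{(n+1)} - \tu^{(n)}\|^2$ and $\|(I-P_B)\tu^{(n+1)}\|^2$. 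From boundedness of the sequence one extracts a convergent subsequence; from summability of the decrements one gets $\|(I-P_B)\tu^{(n+1)}\| \to 0$ and $\|\tu^{(n+1)}-\tu^{(n)}\| \to 0$, so any cluster point is feasible and satisfies the optimality system, i.e. is a critical point of~\eqref{saddle_pb}. Finally, a standard argument (the Féjer-type monotonicity holds with respect to \emph{any} saddle point, and a cluster point is a saddle point) upgrades subsequential convergence to convergence of the whole sequence.

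The main obstacle I anticipate is bookkeeping the Riesz isomorphisms and the two representations of the constraint: the paper deliberately works with $(I-P_B)\tu = 0$ on $\tV$ rather than $B\tu = 0$ on $\tV_\Gamma^*$, and it drops the factor $(I-P_B)$ from the linear multiplier term in passing from~\eqref{tu3} to~\eqref{tu_final} under the invariant $J_{\tV^*}\lambda^{(n)} \in (\ker B)^\perp$. To make the ADMM identification clean, I would first prove that this invariant is preserved by the iteration (this is presumably Proposition~\ref{Prop:kerB}, which I may cite), so that $\langle J_{\tV}\tu,\lambda^{(n)}\rangle = \langle J_{\tV}(I-P_B)\tu,\lambda^{(n)}\rangle$ throughout and the two forms of the $\tu$-update genuinely coincide. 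A secondary technical point is that $\tE$ need not be strongly convex, so the $\tu$-update minimizer need not be unique; this is why the argument tracks $\|\tu^{(n)} - \tu^\star\|^2$ rather than assuming uniqueness, and why the convergence is to ``a'' critical point. Modulo these points, no genuinely new idea beyond~\cite{HY:2015,WT:2010} is needed, which is why the detailed proof is deferred to Appendix~\ref{App:analysis}.
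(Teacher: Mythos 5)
Your proposal is correct, and the analytic core coincides with what the paper does in Appendix~A: establish the variational inequality characterizing the $\tu$-update (the paper's Lemma~A.3, which uses the invariant $J_{\tV^*}\lambda^{(n)} \in (\ker B)^{\bot}$ from Proposition~A.2 exactly as you anticipate), prove Fej\'er-type monotonicity of $e_n = \eta\|P_B(\tu^{(n)}-\tu^*)\|_2^2 + \tfrac{1}{\eta}\|\lambda^{(n)}-\lambda^*\|_2^2$ with decrement bounded below by $\eta\|P_B(\tu^{(n)}-\tu^{(n+1)})\|_2^2 + \eta\|(I-P_B)\tu^{(n+1)}\|_2^2$, sum to get the vanishing of the increments and feasibility of cluster points, identify a cluster point as a critical point via graph-closedness of $\partial\tE$, and upgrade to whole-sequence convergence by re-running the monotonicity with $(\tu^*,\lambda^*)$ taken to be the cluster point. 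Note that, as you correctly hedge, the monotone quantity involves $P_B\tu^{(n)}$ and not $\tu^{(n)}$ itself; boundedness of the full iterate only follows after one knows $(I-P_B)\tu^{(n)} \to 0$.

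Where you differ from the paper is in the packaging: you make explicit the identification of Algorithm~1 as two-block ADMM for $\min\{\tE(\tu) + \chi_{\ker B}(\tw)\}$ subject to $\tu = \tw$, with the $\tw$-update collapsing to $\tw^{(n)} = P_B\tu^{(n)}$ precisely because $P_B J_{\tV^*}\lambda^{(n)} = 0$. This identification is exact (the update order $\tu \to \tw \to \lambda$ and the multiplier recursion all match), and it buys you two things the paper only gestures at: a clean explanation of why dropping $(I-P_B)$ from the multiplier term in passing from~\eqref{tu3} to~\eqref{tu_final} is harmless, and the ability to import the convergence theory of~\cite{HY:2015,WT:2010} wholesale rather than re-deriving the Lyapunov estimate from scratch, as the paper's self-contained Lemma~A.6 does. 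The paper's direct route avoids introducing the auxiliary variable and verifying that critical points of~\eqref{saddle_pb} correspond to saddle points of the two-block Lagrangian (a small but necessary check in your route, since the $P_B$-component of $\lambda^*$ is unconstrained in~\eqref{saddle_pb}). Either organization yields a complete proof.
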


\begin{theorem}
\label{Thm:rate}
Assume that $\tE$ is convex.
Then the sequence $\{ (\tu^{(n)} , \lambda^{(n)}) \}$ generated by Algorithm~\ref{Alg:DD} satisfies
\begin{multline*}
\| P_B (\tu^{(n)} - \tu^{(n+1)}) \|_2^2 + \frac{1}{\eta^2} \| \lambda^{(n)} - \lambda^{(n+1)} \|_2^2 \\
\leq \frac{1}{n+1} \inf_{(\tu^* , \lambda^*)} \left( \| P_B (\tu^{(0)} - \tu^* )  \|_2^2 + \frac{1}{\eta^2} \| \lambda^{(0)} - \lambda^* \|_2^2 \right), \quad n \geq 0,
\end{multline*}
where the infimum is taken over all critical points of~\eqref{saddle_pb}.
\end{theorem}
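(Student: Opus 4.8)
The strategy is the variational-inequality argument of~\cite{HY:2015,WT:2010}: reduce one step of Algorithm~\ref{Alg:DD} to a monotone inclusion, extract a one-step decay for the quantity in the statement, and then upgrade the resulting summability bound to a pointwise $O(1/n)$ rate by showing that this quantity is itself nonincreasing in $n$. Throughout I would identify $\tV$ and $\tV_s$ with their duals as in Remark~\ref{Rem:Riesz}, so that $P_B$ is the self-adjoint orthogonal projection onto $\ker B$ and $I-P_B$ the projection onto $(\ker B)^{\bot}$.

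Writing the optimality condition of the $\tu$-subproblem~\eqref{tu_final} and eliminating $\lambda^{(n)}$ by the multiplier update gives the inclusion
\[
-\lambda^{(n+1)} - \eta\, P_B(\tu^{(n+1)} - \tu^{(n)}) \in \partial \tE(\tu^{(n+1)}),
\]
together with the identity $(I-P_B)\tu^{(n+1)} = \tfrac{1}{\eta}(\lambda^{(n+1)} - \lambda^{(n)})$, which is simply the $\lambda$-update rewritten. By Proposition~\ref{Prop:kerB} one has $\lambda^{(n)} \in (\ker B)^{\bot}$ for all $n$, which makes these two relations compatible and lets one replace pairings against $\lambda$-increments by pairings with the $(\ker B)^{\bot}$-component of the corresponding $\tu$. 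A critical point $(\tu^*,\lambda^*)$ of~\eqref{saddle_pb} satisfies $\tu^* \in \ker B$, hence $P_B\tu^* = \tu^*$, and $-(I-P_B)\lambda^* \in \partial\tE(\tu^*)$. Using convexity of $\tE$, so that $\partial\tE$ is monotone, I would test the first inclusion at $\tu^*$ and the critical-point inclusion at $\tu^{(n+1)}$, add them, and simplify with the two relations above, the self-adjointness of $P_B$, and $P_B\tu^*=\tu^*$, obtaining
\[
\frac{1}{\eta}\big\langle \lambda^{(n+1)}-\lambda^*,\ \lambda^{(n+1)}-\lambda^{(n)}\big\rangle + \eta\big\langle P_B(\tu^{(n+1)}-\tu^{(n)}),\ P_B\tu^{(n+1)}-\tu^*\big\rangle \le 0 .
\]

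Applying $\langle a-b,\,a-c\rangle = \tfrac12\big(\|a-b\|^2 + \|a-c\|^2 - \|b-c\|^2\big)$ to each term and setting $\Psi_n := \|P_B\tu^{(n)}-\tu^*\|_2^2 + \tfrac{1}{\eta^2}\|\lambda^{(n)}-\lambda^*\|_2^2$, the inequality rearranges to
\[
\Psi_{n+1} + R_n \le \Psi_n, \qquad R_n := \|P_B(\tu^{(n+1)}-\tu^{(n)})\|_2^2 + \tfrac{1}{\eta^2}\|\lambda^{(n+1)}-\lambda^{(n)}\|_2^2 .
\]
In particular $\Psi_n$ is nonincreasing — the ingredient behind Theorem~\ref{Thm:global} — and telescoping yields $\sum_{k=0}^{n} R_k \le \Psi_0$ for every $n$.

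To convert $\sum_k R_k \le \Psi_0$ into a bound on $R_n$ alone, I would prove $R_{n+1}\le R_n$. For this one writes the displayed inclusion at indices $n$ and $n-1$, invokes monotonicity of $\partial\tE$ along $\tu^{(n+1)}-\tu^{(n)}$, and substitutes $(I-P_B)(\tu^{(n+1)}-\tu^{(n)}) = \tfrac{1}{\eta}\big((\lambda^{(n+1)}-\lambda^{(n)}) - (\lambda^{(n)}-\lambda^{(n-1)})\big)$ so as to express everything through the increments of $P_B\tu^{(n)}$ and of $\lambda^{(n)}$; the elementary bound $\langle a,\,a-b\rangle \ge \tfrac12(\|a\|^2-\|b\|^2)$ then gives $R_n\le R_{n-1}$. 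Combining $R_n \le R_k$ for $k\le n$ with $\sum_{k=0}^{n}R_k\le\Psi_0$ gives $(n+1)R_n\le\Psi_0$, and taking the infimum over all critical points $(\tu^*,\lambda^*)$ produces exactly the stated estimate. I expect the monotonicity $R_{n+1}\le R_n$ to be the main obstacle: the decay of $\Psi_n$ is a routine proximal-point computation, whereas this second step requires the additional variational inequality between consecutive iterates together with careful use of the orthogonal splitting $\tV=\ker B\oplus(\ker B)^{\bot}$, and it is precisely here that Proposition~\ref{Prop:kerB} is essential.
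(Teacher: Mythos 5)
Your proposal is correct and follows essentially the same route as the paper: your quantities $\Psi_n$ and $R_n$ are (up to the normalization by $\eta$) exactly the Lyapunov functional $e_n$ and gap $d_n$ of Lemmas~\ref{Lem:e_dec} and~\ref{Lem:d_dec}, and your two key steps --- the one-step decay $\Psi_{n+1}+R_n\le\Psi_n$ and the monotonicity $R_{n+1}\le R_n$, combined via $(n+1)R_n\le\sum_{k=0}^n R_k\le\Psi_0$ --- are precisely the paper's argument. Phrasing the steps through the subgradient inclusion and monotonicity of $\partial\tE$ rather than by adding the two convexity characterizations of Lemma~\ref{Lem:char} is only a cosmetic difference.
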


\section{Applications}
\label{Sec:Applications}
In this section, we provide several applications of the proposed DDM for variational imaging problems.
All experiments in this section were implemented in ANSI C with OpenMPI, compiled by Intel Parallel Studio XE, and performed on a computer cluster consisting of seven machines, where each machine has two Intel Xeon SP-6148 CPUs~(2.4GHz, 20C), 192GB RAM, and the operating system CentOS~7.4 64bit.

Let $W$ be the collection of all functions from $\Omega$ to $\mathbb{R}^2$.
For $\p = (p^1 , p^2 ) \in W$, the pointwise absolute value $|\p| \in V$ is given by
\begin{equation*}
|\p|_{ij} = \sqrt{(p_{ij}^1)^2 + (p_{ij}^2)^2 }, \quad 1 \leq i \leq M \textrm{ and } 1 \leq j \leq N.
\end{equation*}
The $p$-norm of $\p \in W$ is computed as
\begin{equation*}
\| \p \|_{p, W} = \|  \, | \p | \, \|_{p, V}.
\end{equation*}

The standard forward/backward finite difference operators on $V$ with the homogeneous Neumann boundary condition are defined as follows:
\begin{equation*} \begin{split}
(D_x^+ u)_{ij} &= \begin{cases} u_{i+1, j} - u_{ij} & \textrm{ if } i < M, \\ 0 & \textrm{ if } i = M, \end{cases} \quad
(D_x^- u)_{ij} = \begin{cases} 0 & \textrm{ if } i = 1, \\ u_{ij} - u_{i-1, j} & \textrm{ if } i > 1, \end{cases}\\
(D_y^+ u)_{ij} &= \begin{cases} u_{i, j+1} - u_{ij} & \textrm{ if } j < N, \\ 0 & \textrm{ if } j = N, \end{cases} \quad 
(D_y^- u)_{ij} = \begin{cases} 0 & \textrm{ if } j = 1, \\ u_{ij} - u_{i , j-1} & \textrm{ if }j > 1. \end{cases}
\end{split} \end{equation*}
Then, the discrete gradient $\nabla^{\pm}$:~$V \rightarrow W$ is defined as
\begin{equation}
\label{d_grad}
\nabla^{\pm} u = (D_x^{\pm} u , D_y^{\pm} u).
\end{equation}
With the discrete operators defined above, a discrete total variation regularizer $TV(u)$ is given by
\begin{equation*}
TV(u) =  \| \nabla^+ u \|_{1, W}.
\end{equation*}
Similarly to~\eqref{V_s}, we define the subspace $W_s$ of $W$ by
\begin{equation*}
W_s = \left\{ \p \in W : \supp \p \subset \Omega_s \right\}.
\end{equation*}
for all $s = 1, \dots, \N$.

To discretize higher order models, we need to introduce the notion of tensor fields.
Let $\W$ be the second order tensor fields on $\Omega$.
For $\P = \begin{bmatrix}p^{11} & p^{12} \\ p^{21} & p^{22} \end{bmatrix} \in \W$, the pointwise absolute value $|\P| \in V$ is given by
\begin{equation*}
|\P|_{ij} = \sqrt{(p_{ij}^{11})^2 + (p_{ij}^{12})^2 + (p_{ij}^{21})^2 + (p_{ij}^{22})^2}, \quad 1 \leq i \leq M \textrm{ and } 1 \leq j \leq N.  
\end{equation*}
The $p$-norm of $\P \in \W$ is defined as
\begin{equation*}
\| \P \|_{p, \W} = \| \, |\P| \, \|_{p, V}.
\end{equation*}
The notion of discrete gradient given in~\eqref{d_grad} is easily extended as $\nabla^{\pm}$:~$W \rightarrow \W$.
In this setting, a discrete Hessian is defined as $\nabla^- \nabla^+ $:~$V \rightarrow \W$.
Also, we define the local space $\W_s$ on $\Omega_s$ similarly to~\eqref{V_s} by
\begin{equation*}
\W_s = \left\{ \P \in \W : \supp \P \subset \Omega_s \right\}
\end{equation*}
for all $s = 1, \dots, \N$.

\subsection{Convex Chan--Vese model for image segmentation}
In~\cite{CEN:2006}, a convex version of the Chan--Vese model~\cite{CV:2001} was proposed in the sense that thresholding a solution of
\begin{equation}
\label{CCV}
\min_{u \in BV(\Omega)} \left\{ \alpha \intO \left[ u ( f - c_1)^2 + (1-u) (f-c_2 )^2 \right] \, dx + \chi_{\left\{ 0 \leq \cdot \leq 1\right\} } (u) + TV(u) \right\}
\end{equation}
yields a global minimizer of the Chan--Vese model.
Here, $f$ is a given image and $c_1$, $c_2$ are predetermined intensity values.
The characteristic function $\chi_{\left\{ 0 \leq \cdot \leq 1\right\} } (u) $ is defined as
\begin{equation*}
\chi_{\left\{ 0 \leq \cdot \leq 1 \right\} } (u) = \begin{cases} 0 & \textrm{ if } 0 \leq u(x) \leq 1 \quad \forall x \in \Omega , \\ \infty & \textrm{ otherwise.} \end{cases}
\end{equation*}

\begin{figure}[]
\centering
\subfloat[][Convex Chan--Vese]{ \includegraphics[width=3.8cm]{./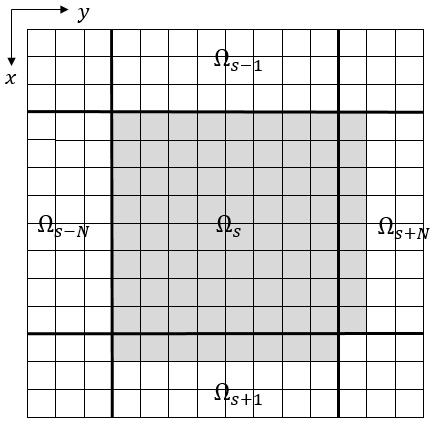} }
\subfloat[][$TV$-$L^1$ deblurring ($5\times 5$ kernel)]{ \includegraphics[width=3.8cm]{./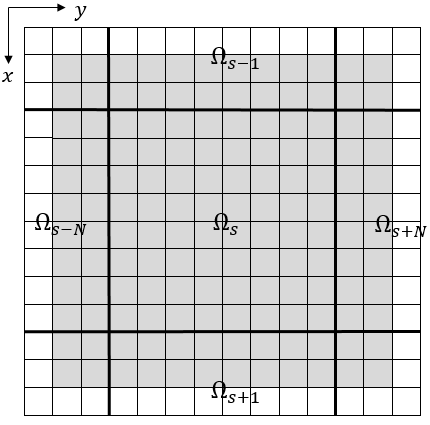} }
\subfloat[][Hessian-$L^1$ denoising]{ \includegraphics[width=3.8cm]{./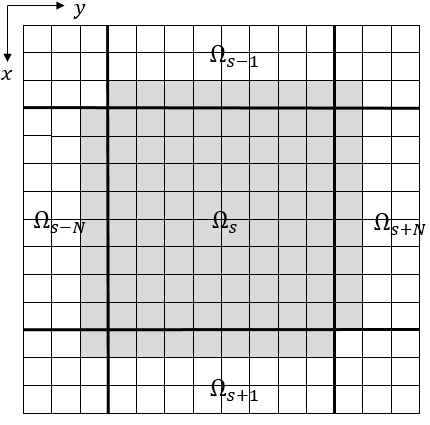} } 
\caption{Essential domains $\ed_{\Omega_s} (T)$ on the subdomain $\Omega_s$ for various examples.}
\label{Fig:ed}
\end{figure}

We may write a discretized version of~\eqref{CCV} as
\begin{equation}
\label{d_CCV}
\min_{u \in V} \left\{ E (u) = \alpha \langle u, g \rangle_V + \chi_{\left\{ 0 \leq \cdot \leq 1\right\} } (u) + \| \nabla^+ u\|_{1, W} \right\},
\end{equation}
where $g = (f - c_1)^2 - (f-c_2)^2$.
One can observe that \eqref{d_CCV} may be regarded as a total variation-regularized problem, i.e., it is expressed as the form of~\eqref{general} with 
\begin{equation*}
A = I , \quad F(u) = \alpha \langle u, g \rangle _V+ \chi_{\left\{ 0 \leq \cdot \leq 1\right\} } (u) ,  \quad \textrm{and} \quad  R(u) = \| \nabla^+ u \|_{1,W}.
\end{equation*}
In addition, $T(u)$ defined in~\eqref{integral} is given by
\begin{equation*}
T(u) = \alpha u g + \chi_{\left\{ 0 \leq \cdot \leq 1\right\}}(u) + |\nabla^+ u|.
\end{equation*}
It is clear that $F(Au)$ is separable.
That is, computation of the term $[ \alpha u g + \chi_{\left\{ 0 \leq \cdot \leq 1\right\}}(u) ] |_{\Omega_s}$ does not need the values of $u$ outside $\Omega_s$ for $s = 1, \dots, \N$.
Thus, we do not need to consider that term for the construction of essential domain, i.e.,
\begin{equation*}
\tOmega_s = \ed_{\Omega_s} (T) = \ed_{\Omega_s} (|\nabla^+|).
\end{equation*}
Since $\nabla^+$ is the forward difference,
the essential domain of $|\nabla^+ |$ on $\Omega_s$ becomes
\begin{equation*}
\ed_{\Omega_s} (| \nabla^+ |) = \bigcup_{(i,j) \in \Omega_s} \left\{ (i,j), (i+1,j), (i, j+1) \in \Omega \right\}.
\end{equation*}
See Fig.~\ref{Fig:ed}(a) for the graphical description of $\ed_{\Omega_s} (| \nabla^+ |)$.

With $\{ \tOmega_s \}$ defined above, one can easily check that the formulation~\eqref{DD} is exactly the same as the one proposed in~\cite{DCT:2016}.
If we apply the first order primal-dual algorithm~\cite{CP:2011} to~\eqref{DD}, then we obtain~\cite[Algorithm~III]{DCT:2016}.
That is,~\cite[Algorithm~III]{DCT:2016} reads as
\begin{eqnarray*}
p^{(n+1)} &=& p^{(n)} + \sigma B(2\tu^{(n+1)} - \tu^{(n)}), \\
\tu^{(n+1)} &=& \argmin_{\tu \in \tV} \left\{ \tE (\tu) + \frac{1}{2\tau} \| \tu - (\tu^{(n)} - \tau B^* p^{(n+1)}) \|_{2, \tV}^2 \right\}
\end{eqnarray*}
for some $\sigma$, $\tau > 0$.
In this sense, we can say that~\cite[Algorithm~III]{DCT:2016} and the proposed DDM solve the same problem~\eqref{DD} but employs different solvers.
While the variable $p^{(n)}$ of~\cite{DCT:2016} lies on the subdomain interfaces, the variable $\lambda^{(n)}$ of the proposed DDM is distributed in each subdomain.
Consequently, the computation of $\lambda^{(n)}$ in the proposed DDM has an advantage in view of parallel computation compared to $p^{(n)}$ in~\cite{DCT:2016}.

Even though the authors of~\cite{DCT:2016} claimed that their proposed DDM is a nonoverlapping one, we regard it as an overlapping one since it is more natural to consider a line of pixels in the image as a subset of positive measure.
This issue will be discussed further in Appendix~\ref{App:DCT}.

Next, we consider how to treat local problems.
By~\eqref{local2}, the general form of local problems in $\tOmega_s$ for~\eqref{d_CCV} is given by
\begin{equation}
\label{local_seg}
\min_{\tu_s \in \tV_s} \left\{ \alpha \langle \tu_s|_{\Omega_s}, g|_{\Omega_s} \rangle_{V_s} + \chi_{\left\{ 0 \leq \cdot \leq 1 \right\}} (\tu_s |_{\Omega_s}) + \| \nabla^+|_{\Omega_s} \tu_s \|_{W_s} + \frac{\eta}{2} \| \tu_s - \hu_s \|_{2, \tV_s}^2 \right\}
\end{equation}
for some $\hu_s \in \tV_s$.
As we noticed in Remark~\ref{Rem:local},~\eqref{local_seg} can be efficiently solved by the $O(1/n^2)$-convergent primal-dual algorithm~\cite{CP:2011} applied to its primal-dual form
\begin{equation*} \begin{split}
\label{local_CV_pd_temp}
\min_{\tu_s \in \tV_s} \max_{\p_s \in W_s} \bigg\{ &\langle \nabla^+ |_{\Omega_s} \tu_s  , \p_s \rangle_{W_s} + \alpha \langle \tu_s |_{\Omega_s} , g|_{\Omega_s} \rangle_{V_s} + \chi_{\left\{ 0 \leq \cdot \leq 1 \right\}} (\tu_s |_{\Omega_s}) \\
&+ \frac{\eta}{2} \| \tu_s - \hu_s \|_{2, \tV_s}^2 - \chi_{\left\{ |\cdot| \leq 1\right\}} (\p_s ) \bigg\}.
\end{split} \end{equation*}
We summarize the primal-dual algorithm for~\eqref{local_seg} in Algorithm~\ref{Alg:local_seg}.

\begin{algorithm}[]
\caption{Primal-dual algorithm for the local segmentation problem \eqref{local_seg}}
\begin{algorithmic}[]
\label{Alg:local_seg}
\STATE Choose $\sigma_0, \tau_0 > 0$ with $\sigma_0 \tau_0 \leq 1/8$ and $0 \leq \gamma \leq \eta$.
Let $\tu_s^{(0)} \in \tV_s$ and $\p_s^{(0)} \in W_s$.
\FOR{$n=0,1,2, \dots$}
\STATE $\displaystyle \p_s^{(n+1)} = \proj_{\left\{ | \cdot | \leq 1 \right\}} \left( \p_s^{(n)} + \sigma_n \nabla^+ |_{\Omega_s} \bu_s^{(n)} \right)$
\STATE $\displaystyle \tu_s^{(n+1)} = \proj_{\left\{ 0 \leq \cdot \leq 1 \right\} } \left( \frac{\tu_s^{(n)} - \tau_n \left[ (\nabla^+ |_{\Omega_s})^* \p_s  + \alpha g |_{\Omega_s} \right] + \tau_n \eta \hu_s}{1 + \tau_n \eta} \right)$
\STATE $\theta_n = 1/\sqrt{1 + 2\gamma \tau_n}$, $\tau_{n+1} = \theta_n \tau_n$, $\sigma_{n+1} = \sigma_n / \theta_n$
\STATE $\bu_s^{(n+1)} = (1+\theta_n )\tu_s^{(n+1)} - \theta_n \tu_s^{(n)}$
\ENDFOR
\end{algorithmic}
\end{algorithm}

The condition $\sigma_0 \tau_0 \leq 1/8$ is derived from the fact that the operator norm of $\nabla^+$ has a bound $\| \nabla^+ \|^2 \leq 8$~\cite[Theorem~3.1]{Chambolle:2004}.
Two projection operators appearing in Algorithm~\ref{Alg:local_seg} are easily computed by pointwise Euclidean projections.

\begin{figure}[]
\centering
\subfloat[][Original image]{ \includegraphics[height=3.8cm]{./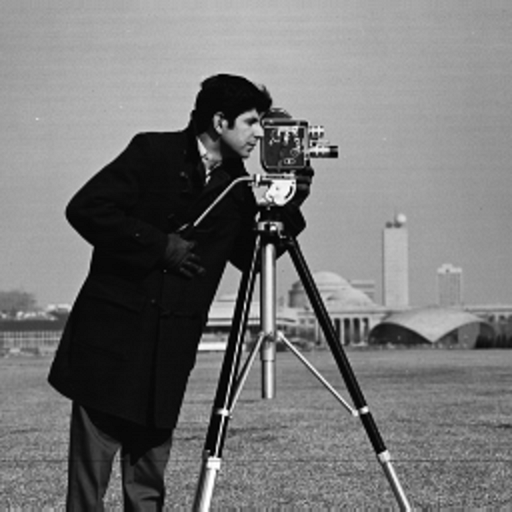} }
\subfloat[][$\N=1$]{ \includegraphics[height=3.8cm]{./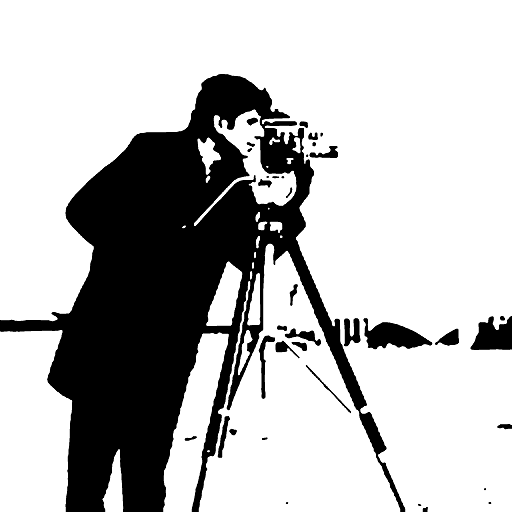} }
\subfloat[][$\N=16 \times 16$]{ \includegraphics[height=3.8cm]{./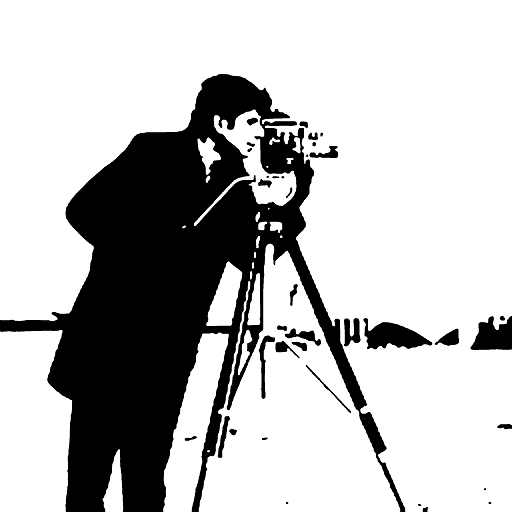} }
\caption{Image results for the image segmentation problem~\eqref{d_CCV}.}
\label{Fig:seg}
\end{figure}
\begin{figure}[]
\centering
\subfloat[][]{ \includegraphics[height=4.0cm]{./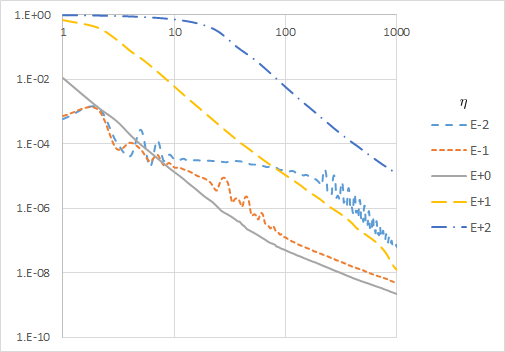} }
\subfloat[][]{ \includegraphics[height=4.0cm]{./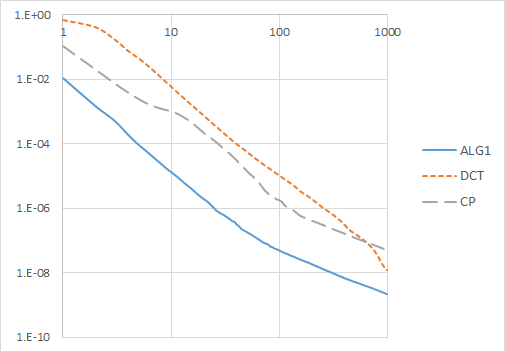} }
\caption{Decay of $\frac{E(u^{(n)}) - E^*}{|E^*|}$ of Algorithm~\ref{Alg:DD} for the image segmentation problem~\eqref{d_CCV} ($\N = 8 \times 8$): \textbf{(a)} various $\eta$, \textbf{(b)} comparison with other methods.}
\label{Fig:energy_seg}
\end{figure}

\begin{table} \centering
\begin{tabular}{| c | c c |} \hline
$\N$ & \#iter& time (sec) \\
\hline
$1$ & - &  31.40 \\
$2\times2$ &73 & 45.30 \\
$4\times4$ & 73 &  15.21 \\
$8\times8$ & 72 &  6.95 \\
$16\times16$ & 72 &  4.82 \\
\hline
\end{tabular}
\captionof{table}{Performance of Algorithm~\ref{Alg:DD} for the image segmentation problem~\eqref{d_CCV}, $\eta = 1$.}
\label{Table:seg}
\end{table}

In order to assess the efficiency of the proposed method, we present several numerical results for Algorithm~\ref{Alg:DD} applied to~\eqref{d_CCV}.
A test image ``Cameraman $2048 \times 2048$'' was used for our experiments~(see Fig.~\ref{Fig:seg}(a)). 
Parameters in~\eqref{d_CCV} were set as $\alpha = 10$, $c_1 = 0.6$, $c_2 = 0.1$, and intial guesses for Algorithm~\ref{Alg:DD} as $\tu^{(0)} = 0$, $\lambda^{(0)} = 0$. 
Since local problems need not to be solved exactly~\cite{CTWY:2015,DCT:2016}~(see also Remark~\ref{Rem:inexact}), local problems~\eqref{local_seg} were approximately solved by 10 iterations of Algorithm~\ref{Alg:local_seg} with $\sigma_0 = \tau_0 = 1/\sqrt{8}$ and $\gamma = 0.125\eta$.
The number of iterations was chosen heuristically to reduce the wall-clock time.

Fig.~\ref{Fig:energy_seg}(a) shows the decay of $\frac{E(u^{(n)}) - E(u^*)}{|E(u^*)|}$ of Algorithm~\ref{Alg:DD} with various penalty parameters $\eta$, where $\N = 8 \times 8$, $u^{(n)} = P_B \tu^{(n)}$, and $E^*$ is the minimum energy computed by $10^6$ iterations of the primal-dual algorithm.
We can observe that the decay of the energy highly depends on $\eta$.
For small $\eta$, we have small $E(u^{(1)})$.
However, $\eta$ should be sufficiently large to accomplish fast convergence rate.

We compared the proposed method with other existing methods for~\eqref{d_CCV} in Fig.~\ref{Fig:energy_seg}(b). 
The following algorithms were used in our experiments:
\begin{itemize}
\item ALG1: Algorithm~\ref{Alg:DD}, $\N = 8 \times 8$, $\eta = 1$.
\item DCT: DDM proposed by Duan, Chang, and Tai~\cite{DCT:2016}, $\N = 8 \times 8$, $\tau = 1$, $\sigma \tau = 1/8$.
\item CP: Primal-dual algorithm proposed by Chambolle and Pock~\cite{CP:2011}, $\sigma = \tau = 1/\sqrt{8}$.
\end{itemize}
We note that we used the $O(1/n^2)$-convergent primal-dual algorithm~\cite[Algorithm~2]{CP:2011} for local problems of DCT instead of~\cite[Algorithm~II]{DCT:2016}.
ALG1 outperforms both DCT and CP in the sense of the energy decay.
On the other hand, ALG1 has an advantage compared to DCT that its parallel implementation is easy because the Lagrange multiplier can be distributed in each processor.

To highlight the efficiency of the proposed method as a parallel solver, we present the timing results with various numbers of subdomains $\N$ in Table~\ref{Table:seg}.
We used the stop criterion
\begin{equation}
\label{stop}
\max \left\{ \left| \frac{E(u^{(n)}) - E(u^{(n+1)})}{E(f)} \right| , \frac{\| u^{(n)} - u^{(n+1)} \|_2 }{\| f\|_2}\right\} < \TOL,
\end{equation}
where $\TOL = 10^{-4}$.
The full-dimension problem~($\N = 1$) was solved by the primal-dual algorithm with the same parameter setting as local problems.
The number of iterations of Algorithm~\ref{Alg:DD} is abbreviated as \#iter.
The wall-clock time reduces as $\N$ grows.
In particular,~\eqref{d_CCV} can be solved in few seconds if we use sufficiently many subdomains.
Figs.~\ref{Fig:seg}(b) and~(c) show the results in the cases $\N = 2 \times 2$ and $\N = 16 \times 16$ thresholded by $1/2$, respectively.
Since two results are not visually distinguishable, it is ensured that the proposed method gives a reliable solution even if $\N$ is large.

\subsection{$TV$-$L^1$ model for image deblurring}
In the $TV$-$L^1$ model~\cite{CE:2005,Nikolova:2004}, the fidelity term and the regularizer are given by the $L^1$-norm and the total variation, respectively.
The discrete $TV$-$L^1$ model for image deblurring is stated as
\begin{equation}
\label{TVL1}
\min_{u \in V} \left\{ E(u) := \alpha \| Au - f \|_{1, V} + \| \nabla^+ u \|_{1,W} \right\},
\end{equation}
where $A$:~$V \rightarrow V$ is a blur kernel.
In this case, $T(u)$ defined in~\eqref{integral} is given by
\begin{equation*}
T(u) = |Au - f| + |\nabla^+ u|,
\end{equation*}
where $f \in V$ is a corrupted image.
Clearly, we have
\begin{equation*}
\ed_{\Omega_s}(T) = \ed_{\Omega_s}(A) \cup \ed_{\Omega_s}(|\nabla^+|).
\end{equation*}
If the blur kernel has the size $(2l+1) \times (2l+1)$ for $l \in \mathbb{Z}_{>0}$,
computation of $Au$ at a point requires the values of $u$ in the $(2l+1) \times (2l+1)$ square centered at the point.
Thus, the essential domain of $A$ on $\Omega_s$ consists of $\Omega_s$ itself and the band of width $l$ enclosing $\Omega_s$.
More precisely, it is expressed as
\begin{equation*}
\ed_{\Omega_s} (A) = \bigcup_{(i,j) \in \Omega_s} \bigcup_{-l \leq a,b \leq l} \left\{ (i+a, j+b) \in \Omega \right\}.
\end{equation*}
Since $\ed_{\Omega_s} (|\nabla^+ |) \subset \ed_{\Omega_s} (A)$, we have
\begin{equation*}
\tOmega_s = \ed_{\Omega_s} (T) = \ed_{\Omega_s} (A).
\end{equation*}
Fig.~\ref{Fig:ed}(b) shows $\ed_{\Omega_s} (T)$ when $l = 2$, that is, a $5 \times 5$ kernel is used.

In view of~\eqref{local2}, local problems in $\tOmega_s$ for~\eqref{TVL1} have the form
\begin{equation}
\label{local_deb}
\min_{\tu_s \in \tV_s} \left\{ \alpha \| A|_{\Omega_s} \tu_s  - f|_{\Omega_s} \|_{1,V_s} + \| \nabla^+|_{\Omega_s} \tu_s \|_{1,W_s} + \frac{\eta}{2} \| \tu_s - \hu_s \|_{2,\tV_s}^2 \right\}
\end{equation}
for some $\hu_s \in \tV_s$.
Algorithm~\ref{Alg:local_deb} presents the $O(1/n^2)$-convergent primal-dual algorithm applied to a primal-dual form
\begin{equation*} \begin{split}
\min_{\tu_s \in \tV_s} \max_{\p_s \in W_s, q_s \in V_s} \bigg\{ &\langle \nabla|_{\Omega_s} \tu_s, \p_s \rangle_{W_s} + \langle A|_{\Omega_s} \tu_s - f|_{\Omega_s} , q_s \rangle_{V_s} + \frac{\eta}{2} \| \tu_s - \hu_s \|_{2, \tV_s}^2 \\
&- \chi_{\left\{ | \cdot | \leq 1\right\} } (\p_s) - \chi_{\left\{ |\cdot | \leq \alpha \right\}} (q_s) \bigg\}
\end{split} \end{equation*}
of~\eqref{local_deb}.
For details on the derivation of the above primal-dual form, see Section~2 of~\cite{LNP:2019}.

\begin{algorithm}[]
\caption{Primal-dual algorithm for the local deblurring problem \eqref{local_deb}}
\begin{algorithmic}[]
\label{Alg:local_deb}
\STATE Choose $\sigma_0, \tau_0 > 0$ with $\sigma_0 \tau_0 \leq 1/9$ and $0 \leq \gamma \leq \eta$.
Let $\tu_s^{(0)} \in \tV_s$, $\p_s^{(0)} \in W_s$, and $q_s^{(0)} \in V_s$.
\FOR{$n=0,1,2, \dots$}
\STATE $\displaystyle \p_s^{(n+1)} = \proj_{\left\{ | \cdot | \leq 1 \right\}} \left( \p_s^{(n)} + \sigma_n \nabla^+ |_{\Omega_s} \bu_s^{(n)} \right)$
\STATE $\displaystyle q_s^{(n+1)} = \proj_{\left\{ |\cdot| \leq \alpha \right\}} \left( q_s^{(n)} + \sigma_n (A|_{\Omega_s} \bu_s^{(n)} - f|_{\Omega_s}) \right)$
\STATE $\displaystyle \tu_s^{(n+1)} = \frac{\tu_s^{(n)} - \tau_n \left[ (\nabla^+|_{\Omega_s})^* \p_s  + (A |_{\Omega_s})^* q_s\right] + \tau_n \eta \hu_s }{1 + \tau_n \eta} $
\STATE $\theta_n = 1/\sqrt{1 + 2\gamma \tau_n}$, $\tau_{n+1} = \theta_n \tau_n$, $\sigma_{n+1} = \sigma_n / \theta_n$
\STATE $\bu_s^{(n+1)} = (1+\theta_n )\tu_s^{(n+1)} - \theta_n \tu_s^{(n)}$
\ENDFOR
\end{algorithmic}
\end{algorithm}

The condition~$\sigma_0 \tau_0 \leq 1/9$ in Algorithm~\ref{Alg:local_deb} is due to that $\| \nabla^+ \|^2 \leq 8$ and $\| A \|^2 \leq 1$~\cite[Proposition~4]{LNP:2019}.
Similarly to Algorithm~\ref{Alg:local_seg}, projection operators in Algorithm~\ref{Alg:local_deb} are accomplished by pointwise Euclidean projections.

\begin{figure}[]
\centering
\subfloat[][Corrupted (PSNR: 29.16)]{ \includegraphics[height=3.8cm]{./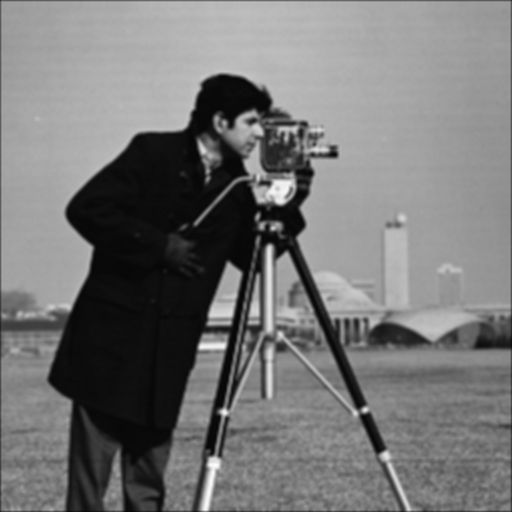} }
\subfloat[][$\N=1$ (PSNR: 40.48)]{ \includegraphics[height=3.8cm]{./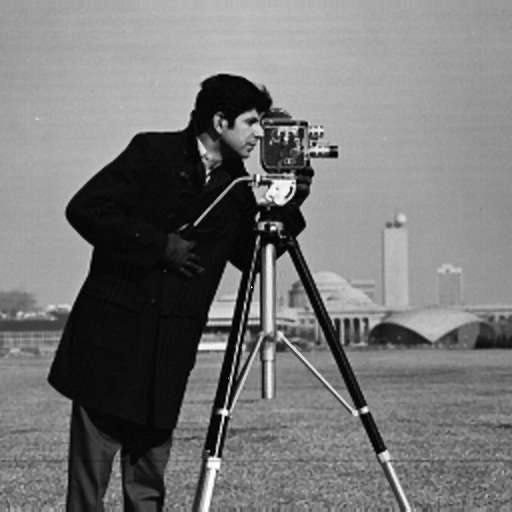} }
\subfloat[][$\N=16 \times 16$ (PSNR: 41.47)]{ \includegraphics[height=3.8cm]{./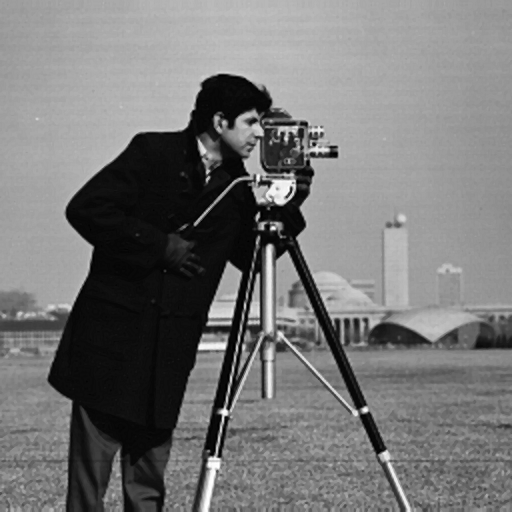} } \\
\subfloat[][Corrupted (PSNR: 24.04)]{ \includegraphics[height=3.8cm]{./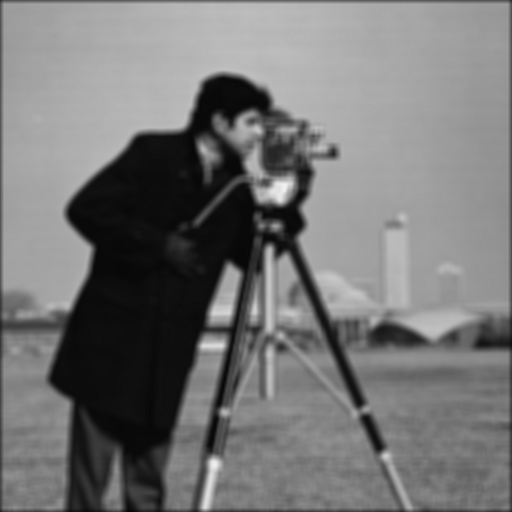} }
\subfloat[][$\N=1$ (PSNR: 35.16)]{ \includegraphics[height=3.8cm]{./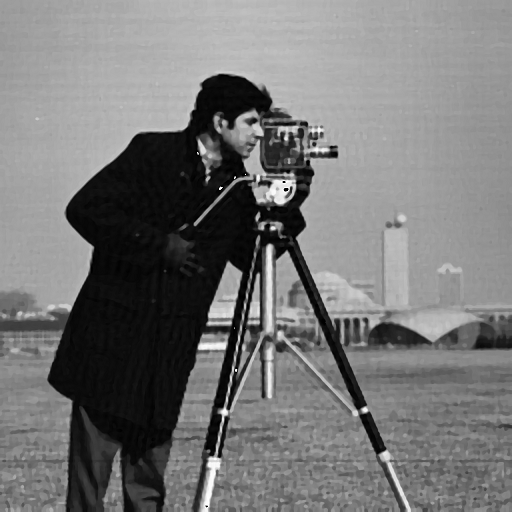} }
\subfloat[][$\N=16 \times 16$ (PSNR: 35.91)]{ \includegraphics[height=3.8cm]{./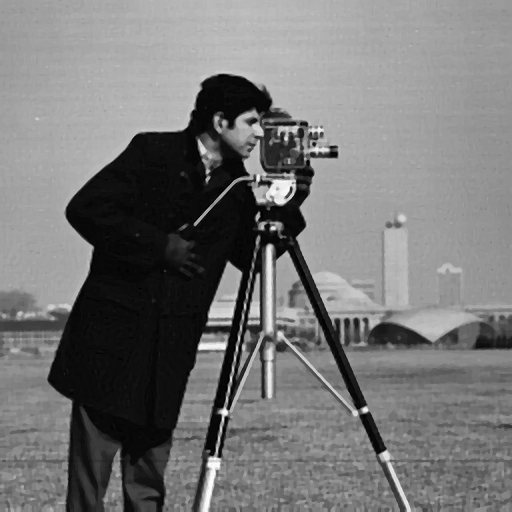} }
\caption{Image results for the image deblurring problem~\eqref{TVL1}. \textbf{(a--c)} $17 \times 17$ kernel, \textbf{(d--f)} $33 \times 33$ kernel.}
\label{Fig:deb}
\end{figure}
\begin{figure}[]
\centering
\subfloat[][$17 \times 17$ kernel]{ \includegraphics[height=4.0cm]{./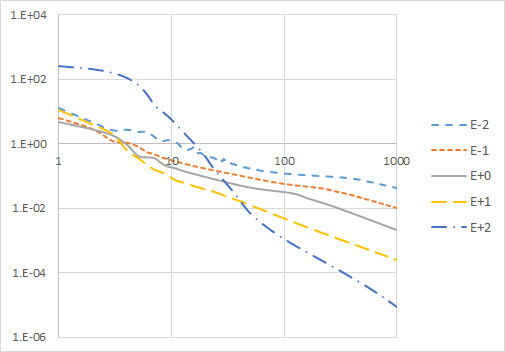} }
\subfloat[][$17 \times 17$ kernel]{ \includegraphics[height=4.0cm]{./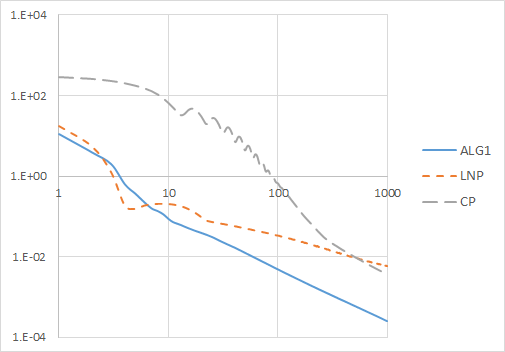} } \\
\subfloat[][$33 \times 33$ kernel]{ \includegraphics[height=4.0cm]{./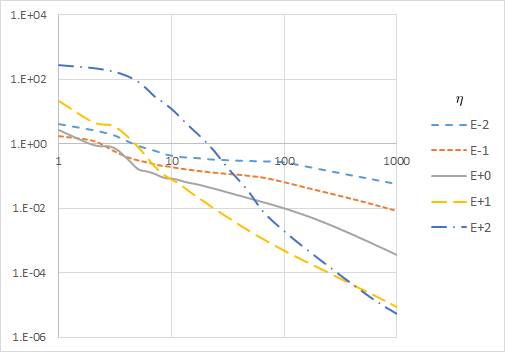} }
\subfloat[][$33 \times 33$ kernel]{ \includegraphics[height=4.0cm]{./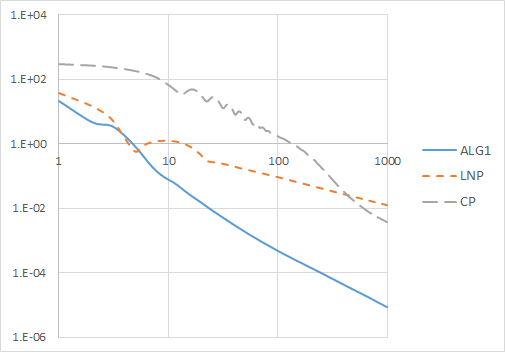} } \\
\caption{Decay of $\frac{E(u^{(n)}) - E^*}{|E^*|}$ of Algorithm~\ref{Alg:DD} for the image deblurring problem~\eqref{TVL1} ($\N = 8 \times 8$): \textbf{(a, c)} various $\eta$, \textbf{(b, d)} comparison with other methods.}
\label{Fig:energy_deb}
\end{figure}

\begin{table} \centering
\begin{tabular}{| c | c c c | c c c |} \hline
\multirow{2}{*}{$\N$} & \multicolumn{3}{c|}{$17 \times 17$ kernel} & \multicolumn{3}{c|}{$33 \times 33$ kernel} \\
\cline{2-7}
& \#iter& time (sec) & PSNR & \#iter& time (sec) & PSNR \\
\hline
$1$ & - &  1240.02 & 40.48 & - &  6890.73 & 35.16\\
$2\times2$ & 16 & 1878.74 & 41.71 & 16 & 11286.46 & 36.09 \\
$4\times4$ & 21 &  464.33 & 41.67 & 21 &  2919.95 & 36.07 \\
$8\times8$ & 25 & 136.33 & 41.62 & 25 & 838.28 & 36.02 \\
$16\times16$ & 31 & 39.22 & 41.47 & 31 & 231.90 & 35.91  \\
\hline
\end{tabular}
\captionof{table}{Performance of Algorithm~\ref{Alg:DD} for the image deblurring problem~\eqref{TVL1}, $\eta = 10$.}
\label{Table:deb}
\end{table}

Now, we provide numerical results for Algorithm~\ref{Alg:DD} for~\eqref{TVL1}.
In~\eqref{TVL1}, the images $f$ were made by applying the $17 \times 17$ and $33 \times 33$ average kernels to the ``Cameraman $2048 \times 2048$'' test image; see Figs.~\ref{Fig:deb}(a) and~(d), respectively.
We set $\alpha = 10$, $\tu^{(0)} = 0$, and $\lambda^{(0)} = 0$.
Local problems~\eqref{local_deb} were solved by 50 iterations of Algorithm~\ref{Alg:local_deb} with $\sigma_0 = \tau_0 = 1/3$ and $\gamma = 0.125 \eta$.
The number of iterations was optimized heuristically with respect to the wall-clock time.

The energy decay of Algorithm~\ref{Alg:DD} for~\eqref{TVL1} with various penalty parameters $\eta$ is shown in Figs.~\ref{Fig:energy_deb}(a) and~(c), where $\N = 8 \times 8$ and $u^{(n)} = P_B \tu^{(n)}$.
The minimum energy $E^*$ is computed by $10^6$ iterations of the primal-dual algorithm.
The behavior of the energy decay with respect to $\eta$ is similar to the case of~\eqref{d_CCV}.

Figs.~\ref{Fig:energy_deb}(b) and~(d) provide the comparison of the energy decay with existing methods for~\eqref{TVL1}:
\begin{itemize}
\item ALG1: Algorithm~\ref{Alg:DD}, $\N = 8 \times 8$, $\eta = 10$.
\item LNP: DDM proposed by Lee, Nam, and Park~\cite{LNP:2019}, $\N = 8\times 8$, $\tau = 0.1$, $\sigma \tau = 1/9$.
\item CP: Primal-dual algorithm proposed by Chambolle and Pock~\cite{CP:2011}, $\tau = 0.02$, $\sigma\tau = 1/9$.
\end{itemize}
Similarly to the case of~\eqref{d_CCV}, ALG1 outperforms other methods in the sense of the energy decay.
We also note that ALG1 does not have data structure on the subdomain interfaces whereas LNP has.

Table~\ref{Table:deb} shows the wall-clock time of the proposed method for various number of subdomains $\N$.
The stop criterion for the experiments in Table~\ref{Table:deb} is~\eqref{stop} with $\mathrm{TOL} = 10^{-3}$.
The case $\N = 1$ was solved by the primal-dual algorithm with the parameter setting described above.
The wall-clock time decreases as $\N$ increases.
PSNRs~(peak signal-to-noise ratios) are slightly different for $\N$, but all are large enough.
We note that such difference arises due to the nonuniqueness of a solution of~\eqref{TVL1}.
The results for $\N=2 \times 2$ and $\N= 16 \times 16$ displayed in Figs.~\ref{Fig:deb}(b),~(e) and~(c),~(f), respectively, are not visually distinguishable.
In particular, they show no trace on the subdomain interfaces.

\subsection{Hessian-$L^1$ denoising}
We noted before that the proposed DDM can be applied to not only first order models but also higher order models.
As a higher order model problem, we consider the following discrete Hessian-regularized problem:
\begin{equation}
\label{LLT}
\min_{u \in V} \left\{ E(u) := \alpha \| A u - f \|_{1, V} + \| \nabla^- \nabla^+ u \|_{1, \W} \right\},
\end{equation}
where $f \in V$ is a corrupted image.
It is readily observed that
\begin{equation*}
T(u) = | Au - f| + |\nabla^- \nabla^+ u|.
\end{equation*}
We note that the Hessian regularizer was proposed in~\cite{LLT:2003} to overcome the staircase effect of first order models.
For simplicity, we consider the denoising problem, i.e., $A = I$.
In this case, we clearly have
\begin{equation*}
\ed_{\Omega_s} (T) = \ed_{\Omega_s} (|\nabla^- \nabla^+ |).
\end{equation*}
Since $\nabla^- \nabla^+ u$ is composed of backward difference of $\nabla^+ u$, $\ed_{\Omega_s} (|\nabla^- \nabla^+ |)$ is expressed as
\begin{equation*} \begin{split}
\ed_{\Omega_s} (|\nabla^- \nabla^+ |) &= \ed_{\ed_{\Omega_s} (| \nabla^- |)} (| \nabla^+ |) \\
&= \bigcup_{(i,j) \in \ed_{\Omega_s} (|\nabla^-|)} \left\{ (i, j), (i+1, j), (i, j+1) \in \Omega \right\}.
\end{split} \end{equation*}
See Fig.~\ref{Fig:ed}(c) for a graphical description.

Similarly to~\eqref{local_deb}, the general form of local problems for~\eqref{LLT} is expressed as
\begin{equation}
\label{local_LLT}
\min_{\tu_s \in \tV_s} \left\{ \alpha \| A|_{\Omega_s} \tu_s  - f|_{\Omega_s} \|_{1,V_s} + \| (\nabla^- \nabla^+)|_{\Omega_s} \tu_s \|_{1,W_s} + \frac{\eta}{2} \| \tu_s - \hu_s \|_{2,\tV_s}^2 \right\}
\end{equation}
for some $\hu_s \in \tV_s$.
Noting that
\begin{equation*}
\| \nabla^- \nabla^+ \|^2 \leq \| \nabla^- \|^2 \| \nabla^+ \|^2 \leq 8 \cdot 8 = 64 ,
\end{equation*}
We obtain Algorithm~\ref{Alg:local_LLT} which efficiently solves~\eqref{local_LLT} in the same manner as Algorithm~\ref{Alg:local_deb}.

\begin{algorithm}[]
\caption{Primal-dual algorithm for the local problem \eqref{local_LLT}}
\begin{algorithmic}[]
\label{Alg:local_LLT}
\STATE Choose $\sigma_0, \tau_0 > 0$ with $\sigma_0 \tau_0 \leq 1/65$ and $0 \leq \gamma \leq \eta$.
Let $\tu_s^{(0)} \in \tV_s$, $\P_s^{(0)} \in \W_s$, and $q_s^{(0)} \in V_s$.
\FOR{$n=0,1,2, \dots$}
\STATE $\displaystyle \P_s^{(n+1)} = \proj_{\left\{ | \cdot | \leq 1 \right\}} \left( \P_s^{(n)} + \sigma_n (\nabla^- \nabla^+ ) |_{\Omega_s} \bu_s^{(n)} \right)$
\STATE $\displaystyle q_s^{(n+1)} = \proj_{\left\{ |\cdot| \leq \alpha \right\}} \left( q_s^{(n)} + \sigma_n (A|_{\Omega_s} \bu_s^{(n)} - f|_{\Omega_s}) \right)$
\STATE $\displaystyle \tu_s^{(n+1)} = \frac{\tu_s^{(n)} - \tau_n \left[ ((\nabla^- \nabla^+)|_{\Omega_s})^* \p_s  + (A |_{\Omega_s})^* q_s\right] + \tau_n \eta \hu_s }{1 + \tau_n \eta} $
\STATE $\theta_n = 1/\sqrt{1 + 2\gamma \tau_n}$, $\tau_{n+1} = \theta_n \tau_n$, $\sigma_{n+1} = \sigma_n / \theta_n$
\STATE $\bu_s^{(n+1)} = (1+\theta_n )\tu_s^{(n+1)} - \theta_n \tu_s^{(n)}$
\ENDFOR
\end{algorithmic}
\end{algorithm}

\begin{figure}[]
\centering
\subfloat[][Corrupted (PSNR: 12.07)]{ \includegraphics[height=3.8cm]{./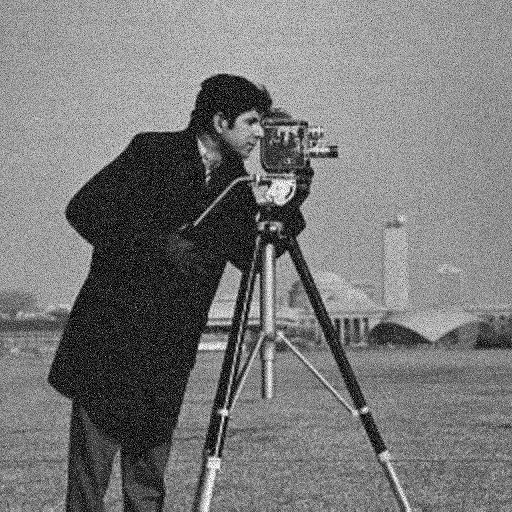} }
\subfloat[][$\N=1$ (PSNR: 57.68)]{ \includegraphics[height=3.8cm]{./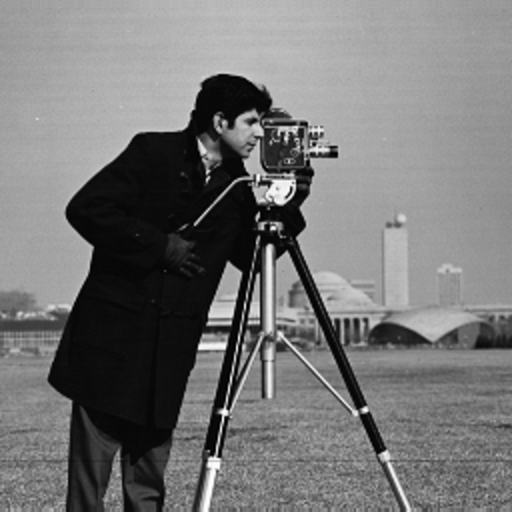} }
\subfloat[][$\N=16 \times 16$ (PSNR: 58.30)]{ \includegraphics[height=3.8cm]{./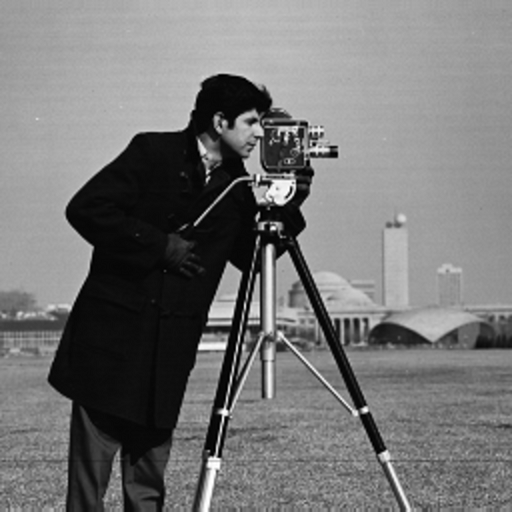} } \\
\subfloat[][Corrupted (PSNR: 9.67)]{ \includegraphics[height=3.8cm]{./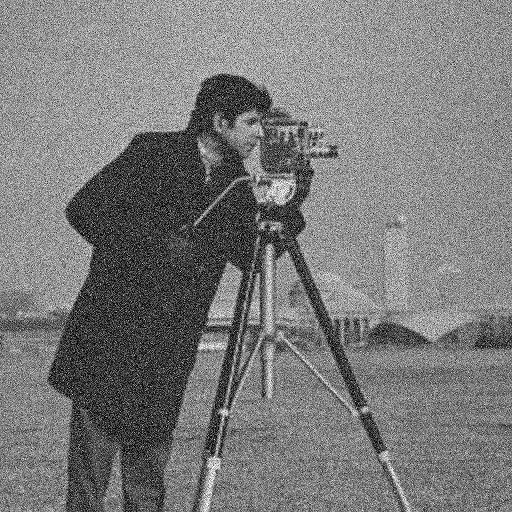} }
\subfloat[][$\N=1$ (PSNR: 54.86)]{ \includegraphics[height=3.8cm]{./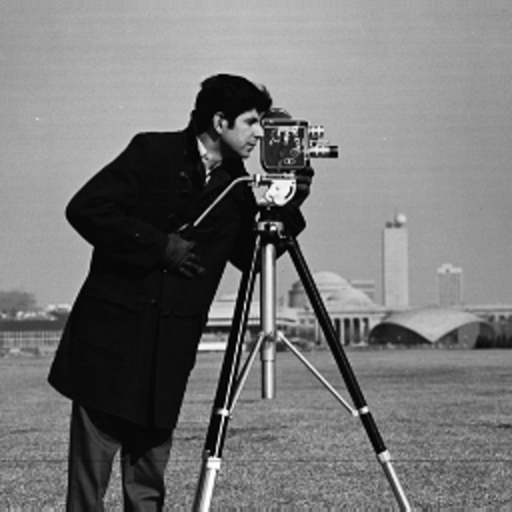} }
\subfloat[][$\N=16 \times 16$ (PSNR: 55.61)]{ \includegraphics[height=3.8cm]{./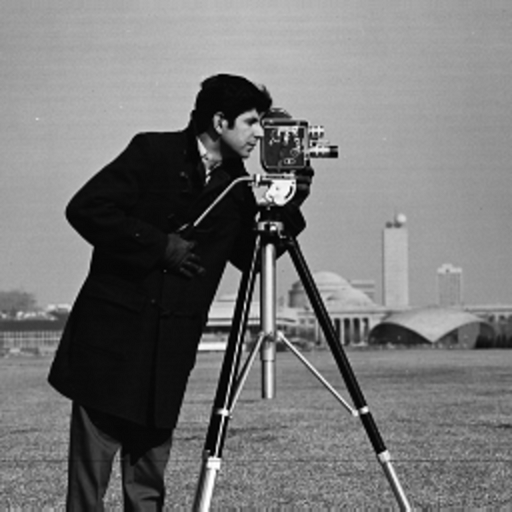} }
\caption{Image results for the Hessian-$L^1$ problem~\eqref{LLT}. \textbf{(a--c)} 20\% noise, \textbf{(d--f)} 40\% noise.}
\label{Fig:LLT}
\end{figure}

\begin{figure}[]
\centering
\subfloat[][20\% noise]{ \includegraphics[height=4.0cm]{./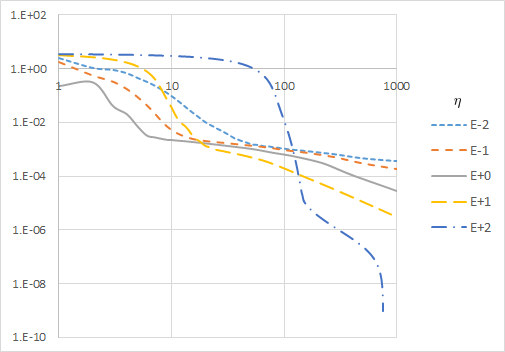} }
\subfloat[][20\% noise]{ \includegraphics[height=4.0cm]{./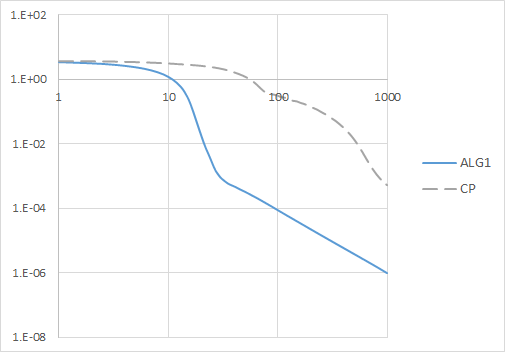} } \\
\subfloat[][40\% noise]{ \includegraphics[height=4.0cm]{./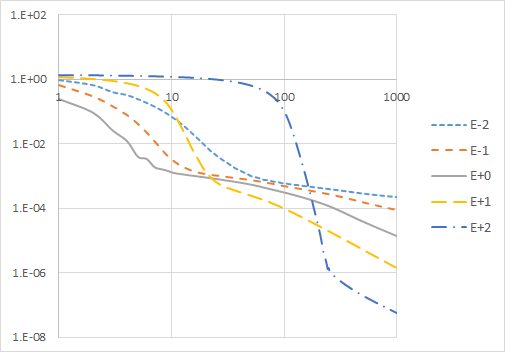} }
\subfloat[][40\% noise]{ \includegraphics[height=4.0cm]{./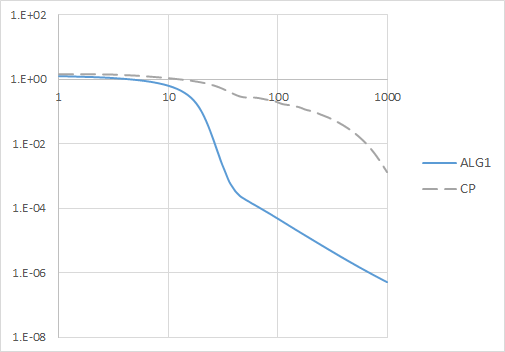} }
\caption{Decay of $\frac{E(u^{(n)}) - E^*}{|E^*|}$ of Algorithm~\ref{Alg:DD} for the Hessian-$L^1$ problem~\eqref{LLT} ($\N = 8 \times 8$): \textbf{(a, c)} various $\eta$, \textbf{(b, d)} comparison with other methods.}
\label{Fig:energy_LLT}
\end{figure}

\begin{table} \centering
\begin{tabular}{| c | c c c | c c c |} \hline
\multirow{2}{*}{$\N$} & \multicolumn{3}{c|}{20\% noise)} & \multicolumn{3}{c|}{40\% noise} \\
\cline{2-7}
& \#iter& time (sec) & PSNR & \#iter& time (sec) & PSNR \\
\hline
$1$ & - &  830.96 & 57.68 & - &  1103.97 & 54.86\\
$2\times2$ & 37 & 239.21 & 58.36 & 47 & 295.99 & 55.65 \\
$4\times4$ & 38 &  62.19 & 58.35 & 49 &  78.18 & 55.64 \\
$8\times8$ & 39 &  17.24 & 58.33 & 52 &  22.67 & 55.63 \\
$16\times16$ & 41 &  4.49 & 57.30 & 57 &  7.16 & 55.61  \\
\hline
\end{tabular}
\captionof{table}{Performance of Algorithm~\ref{Alg:DD} for the Hessian-$L^1$ problem~\eqref{LLT}, $\eta = 20$.}
\label{Table:LLT}
\end{table}

For numerical experiments of the proposed DDM for~\eqref{LLT}, we use the test images ``Cameraman $2048 \times 2048$'' corrupted by 20\% and 40\% salt-and-pepper noise; see Figs.~\ref{Fig:LLT}(a) and~(d), respectively.
We set $\alpha = 1$, $\tu^{(0)} = 0$, and $\lambda^{(0)} = 0$.
50 iterations of Algorithm~\ref{Alg:local_LLT} with $\sigma_0 = \tau_0 = 1/\sqrt{65}$ and $\gamma = 0.125 \eta$ were used as local solvers.

Figs.~\ref{Fig:energy_LLT}(a) and~(c) displays the decay $\frac{E(u^{(n)}) - E^*}{|E^*|}$ of the proposed method with $\N = 8 \times 8$ and various penalty parameters $\eta$, where $u^{(n)} = P_B \tu^{(n)}$ and $E^*$ is obtained by $10^6$ primal-dual iterations again.
The energy decay in the case~\eqref{LLT} shows a similar behavior to other methods~\eqref{d_CCV} and~\eqref{TVL1}.

In Figs.~\ref{Fig:energy_LLT}(b) and~(d), the energy decay of the following two methods for~\eqref{LLT} are plotted:
\begin{itemize}
\item ALG1: Algorithm~\ref{Alg:DD}, $\N = 8 \times 8$, $\eta = 20$.
\item CP: Primal-dual algorithm proposed by Chambolle and Pock~\cite{CP:2011}, $\tau = 0.02$, $\sigma \tau = 1/65$.
\end{itemize}
To the best of our knowledge, there is no existing DDM which accommodate higher order imaging problems such as~\eqref{LLT}.
Thus, Figs.~\ref{Fig:energy_LLT}(b) and~(d) do not contain a comparison with existing DDMs.
It is clear that ALG1 converges to the minimum faster than CP.

Table~\ref{Table:LLT} provides the wall-clock time of the proposed DDM for~\eqref{LLT} with respect to various $\N$.
The case $\N = 1$ represents CP described above.
We used the stop criterion~\eqref{stop} with $\mathrm{TOL} = 10^{-3}$.
We see that the wall-clock time is effectively reduced if $\N$ is large.
Figs.~\ref{Fig:LLT}(b),~(e) and~(c),~(f) shows the image results of the cases $\N = 2 \times 2$ and $\N = 16 \times 16$, respectively.
We observe that they show no trace on the subdomain interfaces.

\begin{remark}
\label{Rem:nonlocal}
The proposed DDM cannot be used for the problems with nonlocal structures such as nonlocal total variation minimization~\cite{ZC:2010}, since the essential domain on each subdomain $\Omega_s$ becomes the whole domain $\Omega$.
\end{remark}

\section{Conclusion}
\label{Sec:Conclusion}
In this paper, we proposed the overlapping domain decomposition framework for variational imaging problems using the notion of essential domains.
Due to the parallel structure of Lagrange multipliers, it is easy to implement the proposed DDM on distributed memory computers.
The proposed DDM was applied to various problems on image processing and showed superior performances compared to existing methods.
To the best of our knowledge, the proposed method is the first DDM which also accommodate higher order models.

This paper gives several subjects for future researches.
Since the convergence rate of the proposed DDM highly depends on a choice of a penalty parameter, it is worth to investigate a good way to choose the penalty parameter.
Recently, an acceleration technique for the alternating direction method of multipliers was proposed in~\cite{Kim:2019} and an application of this technique to the proposed DDM may yield a remarkable improvement of the convergence rate.
Finally, we expect that the proposed method is applicable to nonconvex problems with a little modification since several augmented Lagrangian approaches have been successfully applied to nonconvex problems~\cite{HLR:2016,WYZ:2019} recently.

 \section*{Conflict of interest}
 The authors declare that they have no conflict of interest.

\appendix
\section{Convergence analysis of Algorithm~\ref{Alg:DD}}
\label{App:analysis}
In this appendix, we analyze the convergence behavior of the decoupled augmented Lagrangian method.
Throughout this section, we assume that $\tE (\tu)$ given in~\eqref{DD} is convex.

The proof of Theorem~\ref{Thm:global} is based on a Lyapunov functional argument, which is broadly used in the analysis of augmented Lagrangian methods~\cite{HY:2015,HLR:2016,WT:2010}.
That is, we show that there exists the Lyapunov functional that is bounded below and decreases in each iteration.
The following lemma is a widely-used property for convex optimization.

\begin{lemma}
\label{Lem:convex}
Let $f$\emph{:}~$\mathbb{R}^n \rightarrow \bar{\mathbb{R}}$ be a convex function, $A$\emph{:}~$\mathbb{R}^n \rightarrow \mathbb{R}^m$ a linear operator, and $b \in \mathbb{R}^m$.
Then, a solution $x^* \in \mathbb{R}^n$ of the minimization problem
\begin{equation*}
\min_{x \in \mathbb{R}^n} f(x) + \frac{\alpha}{2} \| Ax - b \|_2^2
\end{equation*}
is characterized by
\begin{equation*}
f(x) \geq f(x^* ) + \alpha \langle Ax^* - b , A(x^* - x) \rangle \hspace{0.5cm} \forall x \in \mathbb{R}^n .
\end{equation*}
\end{lemma}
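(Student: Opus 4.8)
The plan is to prove the stated characterization of the minimizer by combining the first-order optimality condition for the given (possibly nonsmooth) convex problem with the convexity inequality for $f$. Write $\phi(x) = f(x) + \frac{\alpha}{2}\|Ax-b\|_2^2$. Since $f$ is convex and $x \mapsto \frac{\alpha}{2}\|Ax-b\|_2^2$ is convex and differentiable, $\phi$ is convex and $x^*$ is a global minimizer if and only if $0 \in \partial \phi(x^*)$. The sum rule for subdifferentials of convex functions (valid since the quadratic term is everywhere finite and differentiable) gives $\partial\phi(x^*) = \partial f(x^*) + \alpha A^\top(Ax^* - b)$, so optimality is equivalent to $-\alpha A^\top(Ax^*-b) \in \partial f(x^*)$.

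Next I would unwind the definition of the subdifferential: $-\alpha A^\top(Ax^*-b) \in \partial f(x^*)$ means precisely that for every $x \in \mathbb{R}^n$,
\begin{equation*}
f(x) \geq f(x^*) + \langle -\alpha A^\top(Ax^*-b),\, x - x^* \rangle.
\end{equation*}
Rewriting the inner product using the adjoint, $\langle -\alpha A^\top(Ax^*-b),\, x - x^* \rangle = -\alpha \langle Ax^*-b,\, A(x-x^*)\rangle = \alpha\langle Ax^*-b,\, A(x^*-x)\rangle$, which yields exactly the claimed inequality
\begin{equation*}
f(x) \geq f(x^*) + \alpha\langle Ax^*-b,\, A(x^*-x)\rangle \qquad \forall x \in \mathbb{R}^n.
\end{equation*}
For the converse direction (that any $x^*$ satisfying this inequality is a minimizer), I would simply reverse the computation to recover $0 \in \partial\phi(x^*)$, or argue directly by adding $\frac{\alpha}{2}\|Ax-b\|_2^2 - \frac{\alpha}{2}\|Ax^*-b\|_2^2 = \alpha\langle Ax^*-b, A(x-x^*)\rangle + \frac{\alpha}{2}\|A(x-x^*)\|_2^2$ to the inequality, which shows $\phi(x) \geq \phi(x^*) + \frac{\alpha}{2}\|A(x-x^*)\|_2^2 \geq \phi(x^*)$.

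The only subtle point — the main obstacle, though a mild one — is justifying the subdifferential sum rule when $f$ is merely convex and extended-real-valued (it may take the value $+\infty$, as in the characteristic-function terms appearing later in the paper). Here the sum rule holds without qualification because the second summand $\frac{\alpha}{2}\|Ax-b\|_2^2$ is finite and continuous on all of $\mathbb{R}^n$ (Moreau–Rockafellar theorem). I would state this justification in one line and cite a standard convex analysis reference, then present the short chain of equivalences above. Everything else is routine algebra with adjoints.
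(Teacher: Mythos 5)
Your proposal is correct and follows essentially the same route as the paper, whose proof is the one-line observation that optimality is equivalent to $-\alpha A^*(Ax^*-b) \in \partial f(x^*)$; you simply flesh out that equivalence (sum rule, definition of the subdifferential, and the converse) in full detail. No gaps.
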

\begin{proof}
It is straightforward from the fact that $- \alpha A^* (Ax^* - b) \in \partial f(x^*)$.
\end{proof}

We observe that if we choose an initial guess $\lambda^{(0)} \in \tV^*$ such that $J_{\tV^*} \lambda^{(0)} \in (\ker B)^{\bot}$, then we have $J_{\tV^*} \lambda^{(n)} \in (\ker B)^{\bot}$ for all $n \geq 0$.

\begin{proposition}
\label{Prop:kerB}
In Algorithm~\ref{Alg:DD}, we have $J_{\tV^*} \lambda^{(n)} \in (\ker B)^{\bot}$ for all $n \geq 1$ if $J_{\tV^*} \lambda^{(0)} \in (\ker B)^{\bot}$.
\end{proposition}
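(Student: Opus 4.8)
The plan is to argue by induction on $n$, where the base case $n=0$ holds by hypothesis. For the inductive step, I would read off from Algorithm~\ref{Alg:DD} the multiplier update
\begin{equation*}
\lambda^{(n+1)} = \lambda^{(n)} + \eta J_{\tV} (I - P_B) \tu^{(n+1)},
\end{equation*}
so that $J_{\tV^*} \lambda^{(n+1)} = J_{\tV^*} \lambda^{(n)} + \eta (I - P_B) \tu^{(n+1)}$, using $J_{\tV^*} J_{\tV} = I$. The first summand lies in $(\ker B)^{\bot}$ by the inductive hypothesis, so it suffices to show that $(I - P_B)\tu^{(n+1)} \in (\ker B)^{\bot}$.

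This last fact is immediate from the definition of $P_B$: since $P_B$ is the orthogonal projection of $\tV$ onto $\ker B$, the operator $I - P_B$ is the orthogonal projection onto $(\ker B)^{\bot}$, hence $(I - P_B)\tu \in (\ker B)^{\bot}$ for every $\tu \in \tV$. Therefore $J_{\tV^*}\lambda^{(n+1)}$ is a sum of two elements of the subspace $(\ker B)^{\bot}$, and the induction closes. Strictly speaking, the statement of the proposition only claims membership for $n \geq 1$; the argument above in fact yields it for all $n \geq 0$ once the base case $n=0$ is assumed, and the case $n=1$ follows from the first application of the update.

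There is essentially no obstacle here: the only point requiring a moment's care is keeping track of the Riesz isomorphisms, i.e.\ that a condition phrased in terms of $J_{\tV^*}\lambda$ living in a subspace of $\tV$ interacts correctly with an update written using $J_{\tV}$. Invoking the identity $J_{\tV^*} = (J_{\tV})^*$ and $J_{\tV^*} J_{\tV} = I$ from the preliminaries of Section~\ref{Sec:DD} resolves this cleanly. As noted in Remark~\ref{Rem:Riesz}, in the implementation one identifies $\tV$ with $\tV^*$ so that $J_{\tV}$ is the identity, and then the statement reduces to the transparent observation that adding elements of $(\ker B)^{\bot}$ stays in $(\ker B)^{\bot}$.
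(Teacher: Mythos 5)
Your argument is correct and is essentially identical to the paper's proof: both rest on the observation that $J_{\tV^*}(\lambda^{(n+1)} - \lambda^{(n)}) = \eta (I - P_B)\tu^{(n+1)}$ lies in $(\ker B)^{\bot}$ because $I - P_B$ projects onto that subspace, followed by induction from the base case. The paper states this in one line; your version just spells out the bookkeeping with the Riesz maps.
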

\begin{proof}
Since $J_{\tV^*} ( \lambda^{(n+1)} - \lambda^{(n)} ) = (I -P_B ) \tu^{(n+1)} \in (\ker B)^{\bot}$ for all $n \geq 0$, a simple induction argument yields the conclusion.
\end{proof}

With Lemma~\ref{Lem:convex} and Proposition~\ref{Prop:kerB}, we readily get the following characterization of $\tu^{(n+1)}$ in Algorithm~\ref{Alg:DD}.

\begin{lemma}
\label{Lem:char}
In Algorithm~\ref{Alg:DD}, $\tu^{(n+1)} \in \tV$ satisfies
\begin{multline*}
\tE (\tu) \geq \tE (\tu^{(n+1)}) + \langle J_{\tV} (I - P_B) (\tu^{(n+1)} - \tu) , \lambda^{(n)} \rangle \\
+ \eta \langle \tu^{(n+1)} - P_B \tu^{(n)} , \tu^{(n+1)} - \tu \rangle \quad \forall \tu \in \tV
\end{multline*}
for $n \geq 0$.
\end{lemma}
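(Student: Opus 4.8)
The plan is to apply Lemma~\ref{Lem:convex} to the $\tu$-subproblem~\eqref{tu_final} that defines $\tu^{(n+1)}$ in Algorithm~\ref{Alg:DD}, and then convert the resulting inequality into the stated form using Proposition~\ref{Prop:kerB}. Recall that $\tu^{(n+1)}$ minimizes
\begin{equation*}
\tu \mapsto \tE(\tu) + \langle J_{\tV} \tu, \lambda^{(n)} \rangle_{\tV^*} + \frac{\eta}{2} \| \tu - P_B \tu^{(n)} \|_{2,\tV}^2 .
\end{equation*}
The first idea is to absorb the linear term $\langle J_{\tV}\tu, \lambda^{(n)}\rangle$ into the convex function and view the quadratic term as $\frac{\eta}{2}\|A\tu - b\|^2$ with $A = I$ (the identity on $\tV$) and $b = P_B\tu^{(n)}$. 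So I would set $f(\tu) = \tE(\tu) + \langle J_{\tV}\tu, \lambda^{(n)}\rangle$, which is convex since $\tE$ is convex and the added term is linear, and then Lemma~\ref{Lem:convex} gives, for all $\tu \in \tV$,
\begin{equation*}
\tE(\tu) + \langle J_{\tV}\tu, \lambda^{(n)}\rangle \geq \tE(\tu^{(n+1)}) + \langle J_{\tV}\tu^{(n+1)}, \lambda^{(n)}\rangle + \eta \langle \tu^{(n+1)} - P_B\tu^{(n)}, \tu^{(n+1)} - \tu \rangle .
\end{equation*}
Rearranging the two linear terms gives $\langle J_{\tV}(\tu^{(n+1)} - \tu), \lambda^{(n)}\rangle$ on the right-hand side, so it remains only to replace this by $\langle J_{\tV}(I-P_B)(\tu^{(n+1)} - \tu), \lambda^{(n)}\rangle$.

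The step where one must be slightly careful is exactly this last replacement: the subproblem~\eqref{tu_final} uses the simplified linear term $\langle J_{\tV}\tu, \lambda^{(n)}\rangle$ rather than $\langle J_{\tV}(I-P_B)\tu, \lambda^{(n)}\rangle$, and these agree only because $J_{\tV^*}\lambda^{(n)} \in (\ker B)^{\perp}$. By Proposition~\ref{Prop:kerB}, this orthogonality holds for all $n \geq 0$ under the stated initialization. Since $P_B$ is the orthogonal projection onto $\ker B$, for any $v \in \tV$ we have $\langle J_{\tV} P_B v, \lambda^{(n)}\rangle = \langle P_B v, J_{\tV^*}\lambda^{(n)}\rangle = 0$ because $P_B v \in \ker B$ and $J_{\tV^*}\lambda^{(n)} \in (\ker B)^{\perp}$; hence $\langle J_{\tV} v, \lambda^{(n)}\rangle = \langle J_{\tV}(I-P_B)v, \lambda^{(n)}\rangle$. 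Applying this with $v = \tu^{(n+1)} - \tu$ converts the linear term on the right-hand side into $\langle J_{\tV}(I-P_B)(\tu^{(n+1)}-\tu), \lambda^{(n)}\rangle$, which is exactly the term in the claimed inequality.

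I expect no serious obstacle here; the lemma is essentially a packaging of the first-order optimality condition, and the only genuine content is the bookkeeping that the simplification leading from~\eqref{tu3} to~\eqref{tu_final} is harmless on the subspace $(\ker B)^{\perp}$ where the multipliers live. One point to state explicitly for completeness is that $\tu^{(n+1)}$, produced in Algorithm~\ref{Alg:DD} by assembling the decoupled local minimizers~\eqref{local2}, indeed coincides with a global minimizer of the full $\tu$-subproblem~\eqref{tu_final}; this follows because $\tE(\tu) + \langle J_{\tV}\tu, \lambda^{(n)}\rangle + \frac{\eta}{2}\|\tu - P_B\tu^{(n)}\|^2$ separates over the direct sum $\tV = \bigoplus_s \tV_s$, as noted in the passage preceding~\eqref{local}. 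With that remark in place, Lemma~\ref{Lem:convex} applies directly and the proof is complete.
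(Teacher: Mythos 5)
Your proposal is correct and follows essentially the same route as the paper: apply Lemma~\ref{Lem:convex} to the subproblem~\eqref{tu_final} with the linear term absorbed into the convex function, then use Proposition~\ref{Prop:kerB} together with the self-adjointness of $I-P_B$ to rewrite $\langle J_{\tV}(\tu^{(n+1)}-\tu),\lambda^{(n)}\rangle$ as $\langle J_{\tV}(I-P_B)(\tu^{(n+1)}-\tu),\lambda^{(n)}\rangle$. The extra remark that the assembled local minimizers solve the global subproblem is a harmless (and reasonable) addition the paper leaves implicit.
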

\begin{proof}
Take any $\tu \in \tV$.
By~\eqref{tu_final}, Lemma~\ref{Lem:convex}, and Proposition~\ref{Prop:kerB}, we obtain
\begin{eqnarray*}
\tE (\tu) &\geq& \tE (\tu^{(n+1)}) + \langle \tu^{(n+1)} - \tu , J_{\tV^*} \lambda^{(n)} \rangle + \eta \langle \tu^{(n+1)} - P_B \tu^{(n)} , \tu^{(n+1)} - \tu \rangle \\
&=& \tE (\tu^{(n+1)}) + \langle \tu^{(n+1)} - \tu , (I-P_B ) J_{\tV^*} \lambda^{(n)} \rangle + \eta \langle \tu^{(n+1)} - P_B \tu^{(n)} , \tu^{(n+1)} - \tu \rangle,
\end{eqnarray*}
which concludes the proof.
\end{proof}

Let $(\tu^* , \lambda^*) \in \tV \times \tV^*$ be a critical point of~\eqref{saddle_pb}.
We define
\begin{subequations}
\label{de_n}
\begin{eqnarray}
\label{d_n}
d_n &=& \eta \| P_B (\tu^{(n)} - \tu^{(n+1)}) \|_{2}^2 + \frac{1}{\eta} \| \lambda^{(n)} - \lambda^{(n+1)} \|_{2}^2 , \\
\label{e_n}
e_n &=& \eta \| P_B (\tu^{(n)} - \tu^*) \|_{2}^2 + \frac{1}{\eta} \| \lambda^{(n)} - \lambda^* \|_{2}^2 .
\end{eqnarray}
\end{subequations}
It is clear that the value $d_n$ measures the difference between two consecutive iterates $(\tu^{(n)} , \lambda^{(n)})$ and $(\tu^{(n+1)}, \lambda^{(n+1)})$, while $e_n$ measures the error of the $n$th iterate $(\tu^{(n)} , \lambda^{(n)})$ with respect to a solution $(\tu^*, \lambda^*)$.
The following lemma presents the Lyapunov functional argument.
We note that the Lyapunov functional that we use in the proof is motivated from~\cite{WT:2010}. 

\begin{lemma}
\label{Lem:e_dec}
The value $e_n$ defined in~\eqref{e_n} is decreasing in each iteration of Algorithm~\ref{Alg:DD}.
More precisely, we have
\begin{equation}
\label{e_diff}
e_n - e_{n+1} \geq d_n
\end{equation}
for $n \geq 0$, where $d_n$ is given in~\eqref{d_n}.
\end{lemma}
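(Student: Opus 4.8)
The plan is to derive the monotonicity inequality \eqref{e_diff} by combining the variational characterization of $\tu^{(n+1)}$ from Lemma~\ref{Lem:char} with the multiplier update rule, and then manipulating inner products into squared norms via the polarization identity $2\langle a-b, a-c\rangle = \|a-b\|^2 + \|a-c\|^2 - \|b-c\|^2$. First I would write down Lemma~\ref{Lem:char} at iteration $n$ with the test function $\tu = \tu^*$, which gives
\begin{equation*}
\tE(\tu^*) \geq \tE(\tu^{(n+1)}) + \langle J_{\tV}(I-P_B)(\tu^{(n+1)} - \tu^*), \lambda^{(n)}\rangle + \eta\langle \tu^{(n+1)} - P_B\tu^{(n)}, \tu^{(n+1)} - \tu^*\rangle.
\end{equation*}
Next I would write down the corresponding optimality condition for $\tu^*$ as the primal part of a critical point of \eqref{saddle_pb}: since $\lambda^*$ is the associated multiplier and $(I-P_B)\tu^* = 0$, one has $\tE(\tu) \geq \tE(\tu^*) + \langle J_{\tV}(I-P_B)(\tu^* - \tu), \lambda^*\rangle$ for all $\tu$; testing this with $\tu = \tu^{(n+1)}$ yields a second inequality. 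Adding the two inequalities cancels the $\tE$ terms and leaves, after using $(I-P_B)\tu^* = 0$ and $\lambda^{(n+1)} - \lambda^{(n)} = \eta J_{\tV}(I-P_B)\tu^{(n+1)}$ (so that $J_{\tV}(I-P_B)(\tu^{(n+1)}-\tu^*) = \tfrac1\eta(\lambda^{(n+1)}-\lambda^{(n)})$ under the Riesz identification), an inequality of the form
\begin{equation*}
0 \geq \tfrac{1}{\eta}\langle \lambda^{(n+1)} - \lambda^{(n)}, \lambda^{(n)} - \lambda^*\rangle + \eta\langle \tu^{(n+1)} - P_B\tu^{(n)}, \tu^{(n+1)} - \tu^*\rangle.
\end{equation*}

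The second main step is to rewrite the $\tu$-inner product purely in terms of $P_B$-images. Because $\tu^* \in \ker B$ we have $\tu^* = P_B\tu^*$, and because $J_{\tV^*}\lambda^{(n)} \in (\ker B)^\perp$ (Proposition~\ref{Prop:kerB}) together with the fact that $\tu^{(n+1)} - P_B\tu^{(n+1)} \in (\ker B)^\perp$, the cross terms involving $(I-P_B)\tu^{(n+1)}$ pair only with $\lambda$-type quantities, not with $\tu^* $ or $P_B$-vectors; so $\langle \tu^{(n+1)} - P_B\tu^{(n)}, \tu^{(n+1)} - \tu^*\rangle$ should collapse to $\langle P_B\tu^{(n+1)} - P_B\tu^{(n)}, P_B\tu^{(n+1)} - \tu^*\rangle$ plus a term $\|(I-P_B)\tu^{(n+1)}\|^2 = \tfrac{1}{\eta^2}\|\lambda^{(n+1)}-\lambda^{(n)}\|^2$ coming from the $(I-P_B)\tu^{(n+1)}$ component pairing with itself. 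I would then apply the polarization identity to both $\langle \lambda^{(n+1)}-\lambda^{(n)}, \lambda^{(n)}-\lambda^*\rangle$ and $\langle P_B\tu^{(n+1)}-P_B\tu^{(n)}, P_B\tu^{(n+1)}-P_B\tu^*\rangle$ to produce $e_{n+1} - e_n$ on one side and $-d_n$-type squared-norm terms on the other, rearranging to obtain $e_n - e_{n+1} \geq d_n$ (the leftover self-pairing term only helps the inequality).

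The step I expect to be the main obstacle is the careful bookkeeping of the orthogonal decomposition $\tV = \ker B \oplus (\ker B)^\perp$: one must track which inner products survive after splitting $\tu^{(n+1)} = P_B\tu^{(n+1)} + (I-P_B)\tu^{(n+1)}$ and using $P_B\tu^{(n)}$, $\tu^* \in \ker B$, and $J_{\tV^*}\lambda^{(\cdot)} \in (\ker B)^\perp$, and verify that the residual-from-self term is nonnegative (hence can be dropped) rather than obstructive. A secondary subtlety is making the Riesz identifications $J_{\tV}$, $J_{\tV^*}$ transparent — I would invoke Remark~\ref{Rem:Riesz} and treat them as identities throughout — and confirming that $(I-P_B)$ is self-adjoint and idempotent, which is what lets $\langle J_{\tV}(I-P_B)\tu^{(n+1)}, \lambda^{(n)}\rangle$ be rewritten in the needed form. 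Once the inner products are correctly sorted, the remaining algebra is the standard ADMM-style Lyapunov computation as in \cite{HY:2015,WT:2010}, and the conclusion \eqref{e_diff} follows directly.
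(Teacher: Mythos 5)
Your proposal is correct and follows essentially the same route as the paper: add the characterization of $\tu^{(n+1)}$ from Lemma~\ref{Lem:char} (tested at $\tu^*$) to the critical-point characterization of $\tu^*$ (tested at $\tu^{(n+1)}$), use $(I-P_B)\tu^*=0$ and the multiplier update, and complete squares. Your explicit orthogonal splitting along $\ker B \oplus (\ker B)^{\perp}$ plus polarization is just a reorganization of the paper's identity $e_n - e_{n+1} = 2S + \eta\|P_B(\bu^{(n)}-\bu^{(n+1)})\|_2^2 + \eta\|(I-P_B)\bu^{(n+1)}\|_2^2$ with $S \geq 0$; note only that the self-pairing term $\|(I-P_B)\tu^{(n+1)}\|^2$ is not merely ``helpful'' but is needed to absorb the $-\tfrac{1}{\eta}\|\lambda^{(n+1)}-\lambda^{(n)}\|^2$ produced by polarizing the multiplier term and thereby recover the full $d_n$.
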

\begin{proof}
By the definition of $(\tu^* , \lambda^*)$, we clearly have $(I - P_B) \tu^* = 0$.
Furthermore, since
\begin{equation*}
\tu^* \in \argmin_{\tu \in \tV} \left\{ \tE(\tu) + \langle J_{\tV} (I-P_B )\tu, \lambda^* \rangle + \frac{\eta}{2} \| (I -P_B ) \tu \|_{2}^2 \right\},
\end{equation*}
by Lemma~\ref{Lem:convex}, $\tu^*$ is characterized by
\begin{equation}
\label{u_char_temp}
\tE (\tu) \geq \tE (\tu^*) - \langle J_{\tV} (I-P_B) (\tu - \tu^*) , \lambda^* \rangle \hspace{0.5cm} \forall \tu \in \tV.
\end{equation}
Taking $\tu = \tu^{(n+1)}$ in \eqref{u_char_temp} yields
\begin{equation}
\label{u_char}
\tE (\tu^{(n+1)}) \geq \tE (\tu^*) - \langle J_{\tV} (I-P_B)\bu^{(n+1)} , \lambda^* \rangle.
\end{equation}
Let $\bu^{(n)} = \tu^{(n)} - \tu^*$ and $\bl^{(n)} = \lambda^{(n)} - \lambda^*$.
Taking $\tu = \tu^*$ in Lemma~\ref{Lem:char} yields
\begin{equation}
\label{un_char}
\tE (\tu^*) \geq \tE (\tu^{(n+1)}) + \langle J_{\tV} (I-P_B) \bu^{(n+1)} , \lambda^{(n)} \rangle + \eta \langle \tu^{(n+1)} - P_B \tu^{(n)} , \bu^{(n+1)}\rangle.
\end{equation}
Then, by adding \eqref{u_char} and \eqref{un_char} and using $P_B \tu^{(n)} = P_B \bu^{(n)}$, we have
\begin{eqnarray*}
0 &\geq& \langle J_{\tV} (I-P_B) \bu^{(n+1)} , \bl^{(n)} \rangle + \eta \langle \bu^{(n+1)} - P_B \bu^{(n)} , \bu^{(n+1)}\rangle \\
&=& \langle J_{\tV} (I-P_B) \bu^{(n+1)} , \bl^{(n)} \rangle + \eta \| (I-P_B) \bu^{(n+1)} \|_2^2 - \eta \langle P_B(\bu^{(n)} - \bu^{(n+1)}), \bu^{(n+1)}\rangle.
\end{eqnarray*}
That is, we obtain
\begin{multline}
\label{S}
S:= -\langle J_{\tV} (I-P_B) \bu^{(n+1)} , \bl^{(n)} \rangle - \eta \| (I-P_B) \bu^{(n+1)} \|_2^2 \\
+ \eta \langle P_B(\bu^{(n)} - \bu^{(n+1)}), \bu^{(n+1)}\rangle \geq 0.
\end{multline}
Using $\bl^{(n+1)} = \bl^{(n)} + \eta J_{\tV} (I-P_B ) \bu^{(n+1)}$, we get
\begin{equation*}
e_n - e_{n+1}
= 2S + \eta \| P_B(\bu^{(n)} - \bu^{(n+1)}) \|_2^2 + \eta \| (I-P_B) \bu^{(n+1)} \|_2^2
\,\,\geq d_n,
\end{equation*}
which yields \eqref{e_diff}.
The last inequality is due to~\eqref{S}.
\end{proof}

Now, we present the proof of Theorem~\ref{Thm:global}.
 
\begin{proof}[Proof of Theorem~\ref{Thm:global}]
As $\tE$ is convex, Lemma~\ref{Lem:e_dec} ensures that \eqref{e_diff} holds.
Since $\left\{e_n \right\}$ is bounded, we conclude that $\left\{ P_B \tu^{(n)} \right\}$ and $\left\{ \lambda^{(n)} \right\}$ are bounded. We sum~\eqref{e_diff} from $n=0$ to $N-1$ and let $N \rightarrow \infty$ to obtain
\begin{equation*}
e_0 - \lim_{n \rightarrow \infty} e_n \geq \sum_{n=0}^N d_n
= \eta \sum_{n=0}^{\infty} \| P_B (\tu^{(n)} - \tu^{(n+1)})\|_2^2 + \eta \sum_{n=0}^{\infty} \| (I-P_B) \tu^{(n+1)} \|_2^2 ,
\end{equation*}
which implies that $P_B(\tu^{(n)} - \tu^{(n+1)}) \rightarrow 0$ and $(I-P_B) \tu^{(n+1)} \rightarrow 0$.
Therefore, $\left\{ \tu^{(n)} \right\}$ is bounded and we have
\begin{subequations}
\label{diff_vanish}
\begin{equation}
\tu^{(n)} - \tu^{(n+1)} = P_B(\tu^{(n)} - \tu^{(n+1)}) + (I-P_B) \tu^{(n)} - (I-P_B) \tu^{(n+1)} \rightarrow 0
\end{equation}
and
\begin{equation}
\lambda^{(n)} - \lambda^{(n+1)} = -\eta J_{\tV} (I-P_B) \tu^{(n+1)} \rightarrow 0.
\end{equation}
\end{subequations}

By the Bolzano--Weierstrass theorem, there exists a limit point $(\tu^{(\infty)}, \lambda^{(\infty)})$ of the sequence $\left\{ (\tu^{(n)} , \lambda^{(n)}) \right\}$.
We choose a subsequence $\{ (\tu^{(n_j)} , \lambda^{(n_j)}) \}$ of $\left\{ (\tu^{(n)} , \lambda^{(n)}) \right\}$ such that
\begin{equation}
\label{subsequence}
(\tu^{(n_j)}, \lambda^{(n_j)}) \rightarrow (\tu^{(\infty)} , \lambda^{(\infty)}) \textrm{ as } j \rightarrow \infty.
\end{equation}
By~\eqref{diff_vanish}, we have 
\begin{equation*}
(\tu^{(n_j - 1)}, \lambda^{(n_j - 1)}) \rightarrow (\tu^{(\infty)} , \lambda^{(\infty)}) \textrm{ as } j \rightarrow \infty.
\end{equation*}
In the $\lambda$-update step with $n = n_j - 1$:
\begin{equation*}
\lambda^{(n_j)} = \lambda^{(n_j -1)} + \eta J_{\tV} (I - P_B) \tu^{(n_j) },
\end{equation*}
we readily obtain $(I -P_B) \tu^{(\infty)} = 0$ as $j$ tends to $\infty$.
On the other hand,~\eqref{tu_final} with $n = n_j - 1$ is equivalent to
\begin{equation*}
\partial \tE (\tu^{(n_j)}) + J_{\tV^*} \lambda^{(n_j - 1)} + \eta ( \tu^{(n_j )} - P_B \tu^{(n_j - 1)}) \ni 0.
\end{equation*}
By the graph-closedness of~$\partial \tE$~(see Theorem~24.4 in~\cite{Rockafellar:2015}), we get
\begin{equation*}
\partial \tE (\tu^{(\infty)} ) + J_{\tV^*} \lambda^{(\infty)} + \eta (I-P_B ) \tu^{(\infty)} \ni 0
\end{equation*}
as $j \rightarrow \infty$.
By Proposition~\ref{Prop:kerB}, we conclude that
\begin{equation*}
\partial \tE (\tu^{(\infty)} ) + (I-P_B ) J_{\tV^*} \lambda^{(\infty)} \ni 0.
\end{equation*}
Therefore, $(\tu^{(\infty)}, \lambda^{(\infty)})$ is a critical point of~\eqref{saddle_pb}.

Finally, it remains to prove that the whole sequence $\{ ( \tu^{(n)}, \lambda^{(n)} ) \}$ converges to the critical point $(\tu^{(\infty)}, \lambda^{(\infty)})$.
Since the critical point $(\tu^*, \lambda^*)$ was arbitrarily chosen, Lemma~\ref{Lem:e_dec} is still valid if we set $(\tu^*, \lambda^*) = (\tu^{(\infty)}, \lambda^{(\infty)})$ in~\eqref{e_n}.
That is, the sequence
\begin{equation*}
e_n = \eta \| P_B (\tu^{(n)} - \tu^{(\infty)}) \|_{2}^2 + \frac{1}{\eta} \| \lambda^{(n)} - \lambda^{(\infty)} \|_{2}^2
\end{equation*}
is decreasing.
On the other hand, by~\eqref{subsequence}, the subsequence $\{ e_{n_j} \}$ tends to $0$ as $j$ goes to $\infty$.
Therefore, the whole sequence $\{ e_n \}$ tends to $0$ and we deduce that $\{ ( \tu^{(n)}, \lambda^{(n)} ) \}$ converges to $(\tu^{(\infty)}, \lambda^{(\infty)})$.
\end{proof}

\begin{remark}
\label{Rem:inexact}
In practice, local problems~\eqref{local2} are solved by iterative algorithms and an inexact solution $\tu^{(n+1)}$ to~\eqref{tu_final} is obtained in each iteration of Algorithm~\ref{Alg:DD}.
That is, for $n \geq 0$, we have
\begin{equation*}
0 \in \partial_{\epsilon_n} J_n (\tu^{(n)})
\end{equation*}
for some $\epsilon_n > 0$, where
\begin{equation*}
J_n (\tu) = \tE (\tu) + \langle J_{\tV} \tu, \lambda^{(n)} \rangle_{\tV^*} + \frac{\eta}{2} \| \tu - P_B \tu^{(n)} \|_{2, \tV}^2.
\end{equation*}
One may refer, e.g.,~\cite{Rockafellar:2015} for the definition of the $\epsilon$-subgradient $\partial_{\epsilon}$.
In this case, the conclusion of Lemma~\ref{Lem:char} is replaced by
\begin{multline}
\label{char_inexact}
\tE (\tu) \geq \tE (\tu^{(n+1)}) + \langle J_{\tV} (I - P_B) (\tu^{(n+1)} - \tu) , \lambda^{(n)} \rangle \\
+ \eta \langle \tu^{(n+1)} - P_B \tu^{(n)} , \tu^{(n+1)} - \tu \rangle - \epsilon_n \quad \forall \tu \in \tV
\end{multline}
for all $n \geq 0$.
By slightly modifying the above proofs using~\eqref{char_inexact}, one can prove without major difficulty that the conclusion of Theorem~\ref{Thm:global} holds under an assumption
\begin{equation*}
\sum_{n=0}^{\infty} \epsilon_n < \infty.
\end{equation*}
The above summability condition of errors is popular in the field of mathematical optimization; see, e.g.,~\cite{Rockafellar:1976}.
\end{remark}

To prove Theorem~\ref{Thm:rate}, we first show that $d_n$ is decreasing.

\begin{lemma}
\label{Lem:d_dec}
The value $d_n$ defined in~\eqref{d_n} is decreasing in each iteration of Algorithm~\ref{Alg:DD}.
\end{lemma}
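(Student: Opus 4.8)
The plan is to follow the contraction argument that is standard in the convergence-rate analysis of augmented Lagrangian / ADMM-type schemes~\cite{HY:2015,WT:2010}, using the variational characterization of the $\tu$-update established in Lemma~\ref{Lem:char}. For readability I would identify $\tV$ with $\tV^*$ through the Riesz isomorphism (cf. Remark~\ref{Rem:Riesz}), so that the $\lambda$-update of Algorithm~\ref{Alg:DD} reads $\lambda^{(n+1)} = \lambda^{(n)} + \eta (I-P_B)\tu^{(n+1)}$ and, by Proposition~\ref{Prop:kerB}, $\lambda^{(n)} \in (\ker B)^{\bot}$ for every $n \geq 0$; the general case is recovered by reinserting $J_{\tV}, J_{\tV^*}$ exactly as in the proofs of Lemmas~\ref{Lem:char} and~\ref{Lem:e_dec}. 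The target estimate is $d_{n+1} \leq d_n$ for all $n \geq 0$.

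First I would write Lemma~\ref{Lem:char} at two consecutive indices, namely the characterization of $\tu^{(n+1)}$ (in terms of $\lambda^{(n)}$ and $\tu^{(n)}$) and that of $\tu^{(n+2)}$ (in terms of $\lambda^{(n+1)}$ and $\tu^{(n+1)}$). Plugging the test function $\tu = \tu^{(n+2)}$ into the former and $\tu = \tu^{(n+1)}$ into the latter and adding the two inequalities, the $\tE$-terms cancel; using $P_B\tu^{(n+1)} - P_B\tu^{(n)} = -P_B(\tu^{(n)}-\tu^{(n+1)})$ to merge the two penalty inner products leaves
\begin{multline*}
0 \geq \big\langle (I-P_B)(\tu^{(n+1)} - \tu^{(n+2)}),\; \lambda^{(n)} - \lambda^{(n+1)} \big\rangle \\
+ \eta \big\langle (\tu^{(n+1)} - \tu^{(n+2)}) - P_B(\tu^{(n)} - \tu^{(n+1)}),\; \tu^{(n+1)} - \tu^{(n+2)} \big\rangle .
\end{multline*}

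Next I would simplify this inequality. From the $\lambda$-update, $(I-P_B)\tu^{(k+1)} = \tfrac{1}{\eta}(\lambda^{(k+1)}-\lambda^{(k)})$, so the $(I-P_B)$-component of $\tu^{(n+1)}-\tu^{(n+2)}$ equals $\tfrac{1}{\eta}\big[(\lambda^{(n+1)}-\lambda^{(n)}) - (\lambda^{(n+2)}-\lambda^{(n+1)})\big]$, while $P_B(\tu^{(n)}-\tu^{(n+1)})$ and the $P_B$-component of $\tu^{(n+1)}-\tu^{(n+2)}$ lie in $\ker B$ and are therefore orthogonal to the corresponding $(I-P_B)$-components and, by Proposition~\ref{Prop:kerB}, to $\lambda^{(n)}-\lambda^{(n+1)}$. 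Substituting the splitting $\tu^{(n+1)}-\tu^{(n+2)} = P_B(\tu^{(n+1)}-\tu^{(n+2)}) + (I-P_B)(\tu^{(n+1)}-\tu^{(n+2)})$ into the displayed inequality, expanding the squared norms and discarding the vanishing cross terms, the inequality reduces, after collecting the $\lambda$-differences, to
\begin{multline*}
\frac{1}{\eta} \big\langle \lambda^{(n)}-\lambda^{(n+1)},\; \lambda^{(n+1)}-\lambda^{(n+2)} \big\rangle \\
+ \eta \big\langle P_B(\tu^{(n)}-\tu^{(n+1)}),\; P_B(\tu^{(n+1)}-\tu^{(n+2)}) \big\rangle \;\geq\; d_{n+1} .
\end{multline*}
Finally, applying $\langle a,b\rangle \leq \tfrac{1}{2}(\|a\|_2^2 + \|b\|_2^2)$ to each of the two inner products on the left, the left-hand side is bounded by $\tfrac{1}{2} d_n + \tfrac{1}{2} d_{n+1}$, and rearranging gives $d_{n+1} \leq d_n$.

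The step I expect to be the main obstacle is the simplification in the third paragraph: one must keep precise track of which vectors sit in $\ker B$ and which in $(\ker B)^{\bot}$, and expand every squared norm so that the numerous cross terms cancel exactly or recombine into $d_n$ and $d_{n+1}$. This is purely mechanical once Lemma~\ref{Lem:char}, the $\lambda$-update, and Proposition~\ref{Prop:kerB} are available, and it uses no idea beyond those already employed in the proof of Lemma~\ref{Lem:e_dec}.
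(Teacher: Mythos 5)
Your argument is correct and follows essentially the same route as the paper: both proofs apply the characterization of Lemma~\ref{Lem:char} at two consecutive iterates with crossed test points, add the resulting inequalities, and exploit the $\lambda$-update together with Proposition~\ref{Prop:kerB}. The only difference is cosmetic — the paper finishes by exhibiting $d_{n-1}-d_n$ exactly as twice a nonnegative term plus a completed square, whereas you bound the two surviving cross inner products by Young's inequality to get $d_{n+1}\leq \tfrac{1}{2}d_n + \tfrac{1}{2}d_{n+1}$; both yield the claimed monotonicity.
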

\begin{proof}
Let $n \geq 1$.
Taking $\tu = \tu^{(n)} $ in Lemma~\ref{Lem:char} yields
\begin{equation}
\label{d_dec1}
\tE( \tu^{(n)}) \geq \tE (\tu^{(n+1)}) + \langle J_{\tV} (I-P_B)(\tu^{(n+1)} - \tu^{(n)}) , \lambda^{(n)} \rangle + \eta \langle\tu^{(n+1)} - P_B \tu^{(n)} , \tu^{(n+1)} - \tu^{(n)} \rangle.
\end{equation}
Also, substituting $n$ by $n-1$ and taking $\tu = \tu^{(n+1)}$ in Lemma~\ref{Lem:char}, we have
\begin{equation}
\tE( \tu^{(n+1)}) \geq \tE (\tu^{(n)}) + \langle J_{\tV} (I-P_B)(\tu^{(n)} - \tu^{(n+1)}) , \lambda^{(n-1)} \rangle + \eta \langle\tu^{(n)} - P_B \tu^{(n-1)} , \tu^{(n)} - \tu^{(n+1)} \rangle.
\label{d_dec2}
\end{equation}
Summation of~\eqref{d_dec1} and~\eqref{d_dec2} yields
\begin{eqnarray*}
0 &\geq& \langle J_{\tV} (I-P_B ) (\tu^{(n)} - \tu^{(n+1)}) , \lambda^{(n-1)}-\lambda^{(n)} \rangle \\
&&+ \eta \langle \tu^{(n)} - \tu^{(n+1)}, \tu^{(n)} - P_B \tu^{(n-1)} - \tu^{(n+1)} + P_B \tu^{(n)} \rangle \\ 
&=& \eta \langle \tu^{(n)} - \tu^{(n+1)}, -P_B \tu^{(n-1)} + 2P_B \tu^{(n)} - \tu^{(n+1)} \rangle,
\end{eqnarray*}
where we used $\lambda^{(n)} = \lambda^{(n-1)} + \eta J_{\tV} (I-P_B) \tu^{(n)}$ in the equality.
Therefore, we get
\begin{equation}
\label{T}
T:= \langle \tu^{(n)} - \tu^{(n+1)}, P_B \tu^{(n-1)} - 2P_B \tu^{(n)} + \tu^{(n+1)} \rangle \geq 0.
\end{equation}
On the other hand, direct computation yields
\begin{equation*}
\frac{1}{\eta} (d_{n-1} - d_n ) 
= 2T + \| \tu^{(n)} - \tu^{(n+1)} - P_B (\tu^{(n-1)} - \tu^{(n)}) \|_2^2
\,\,\geq 0,
\end{equation*}
which concludes the proof.
The last inequality is due to~\eqref{T}.
\end{proof}

Combining Lemmas~\ref{Lem:e_dec} and~\ref{Lem:d_dec}, we get the proof Theorem~\ref{Thm:rate}, which closely follows~\cite{HY:2015}.

\begin{proof}[Proof of Theorem~\ref{Thm:rate}]
Invoking Lemmas~\ref{Lem:d_dec} and~\eqref{e_diff} yields
\begin{equation*}
(n+1)d_n \leq \sum_{k=0}^{n} d_k \leq e_0 - e_{n+1} \leq e_0.
\end{equation*}
This completes the proof.
\end{proof}

\section{A remark on the continuous setting}
\label{App:DCT}
As we noticed in Section~\ref{Sec:Applications}, the proposed domain decomposition framework reduces to the one proposed in~\cite{DCT:2016} when it is applied to the convex Chan--Vese model~\cite{CE:2005}.
However, while the authors of~\cite{DCT:2016} introduced their method as a nonoverlapping DDM, we classified it as an overlapping one.
In this section, we claim that the proposed method belongs to a class of overlapping DDMs in the continuous setting.

For simplicity, we consider the case $\N = 2$ only.
Let $\left\{ \Omega_s \right\}_{s=1}^2$ be a nonoverlapping domain decomposition of $\Omega$ with the interface $\Gamma = \partial \Omega_1 \cap \partial \Omega_2$.
Recall the convex Chan--Vese model~\eqref{CCV}:
\begin{equation}
\label{CCV_re}
\min_{u \in BV(\Omega)} \left\{ \alpha \intO ug \,dx + \chi_{\left\{ 0 \leq \cdot \leq 1\right\}} (u) + TV_{\Omega}(u) \right\},
\end{equation}
where $g = (f - c_1 )^2 - (f-c_2 )^2$ and $TV_{\Omega}(u)$ is defined as
\begin{equation*}
TV_{\Omega} (u) = \sup \left\{\intO u \div \p \,dx : \p \in C_0^1(\Omega, \mathbb{R}^2), |\p| \leq 1 \right\}.
\end{equation*}
In Section~3.1 of~\cite{DCT:2016}, it was claimed that a solution of~\eqref{CCV_re} can be constructed by $u = u_1 \oplus u_2$, where $(u_1, u_2)$ is a solution of the constrained minimization problem
\begin{equation}
\label{CCV_DCT}
\min_{\substack{u_s \in BV(\Omega_s) \\ s=1,2}} \sum_{s=1}^2 \left( \alpha \intOs u_s g \,dx + \chi_{\left\{ 0 \leq \cdot \leq 1 \right\} } (u_s) + TV_{\Omega_s}(u_s) \right) \hspace{0.3cm} \textrm{subject to } u_1 = u_2 \textrm{ on } \Gamma. 
\end{equation}
Here, the condition $u_1 = u_2$ on $\Gamma$ is of the trace sense~\cite{EG:1992}.
Unfortunately, this argument is not valid since the solution space $BV(\Omega)$ of~\eqref{CCV_re} allows discontinuities on $\Gamma$.
We provide a simple counterexample inspired from~\cite{CE:2005}.

\begin{example}
Let $\Omega = (-1, 1) \subset \mathbb{R}$, $\Omega_1 = (-1, 0)$, and $\Omega_2 = (0, 1)$.
We set
$$
g(x) = \begin{cases} -1 & \textrm{ if } x \in \Omega_1 , \\ 1 & \textrm{ if } x \in \Omega_2 . \end{cases}
$$
We will show that
$$
u^*(x) = \begin{cases} 1 & \textrm{ if } x \in \Omega_1 , \\ 0 & \textrm{ if } x \in \Omega_2 \end{cases}
$$
is a unique solution of~\eqref{CCV_re} for sufficiently large $\alpha$, while it cannot be a solution of~\eqref{CCV_DCT} since it is not continuous on $\Gamma$.
We clearly have $TV_{\Omega}(u^*) = 1$.
There exists $p^* \in C_0^1 (\Omega)$ with $|p^*| \leq 1$ which attains the supremum in the definition of total variation for $u^*$. Indeed, with $p^*(x) = 1-x^2$, we have
\begin{equation*}
1 = TV_{\Omega}(u^* ) = \sup\left\{ \intO u^* p' \,dx : p \in C_0^1 (\Omega), |p| \leq 1 \right\} \\
\geq \intO u^* (p^*)' \,dx = 1. 
\end{equation*}
Choose $\alpha > 2 = \max_{x \in \Omega} |(p^*)'(x)|$.
For any $u \in BV(\Omega)$ with $0 \leq u \leq 1$, we have
\begin{eqnarray*}
\alpha \intO ug \,dx + TV_{\Omega}(u) &\geq& \alpha \intO ug \,dx + \intO u(p^*)' \,dx \\
&=& \alpha \intO u^* g \,dx + TV_{\Omega}(u^*) + \intO (u-u^*) (\alpha g + (p^*)') \,dx. \\
\end{eqnarray*}
In addition, we have
\begin{equation*}
\intO (u-u^*) (\alpha g + (p^*)')\,dx = \int_{-1}^0 (1-u) (\alpha - (p^*)') \,dx + \int_0^1 u (\alpha + (p^*)')\,dx \geq 0. 
\end{equation*}
Since $\alpha \pm (p^*)'$ is strictly positive, the equality holds if and only if $u = u^*$ a.e..
Therefore, $u^*$ is a unique solution of~\eqref{CCV_re}.
\end{example}

On the other hand, it is possible to construct an equivalent constrained minimization problem with an overlapping domain decomposition.
Let $S$ be a neighborhood of $\Gamma$ with positive measure.
Note that traces $\gamma_1 u$ and $\gamma_2 u$ of $u \in BV(\mathcal{O})$ along $\Gamma$ with respect to $\mathcal{O} \cap \Omega_1$ and $\mathcal{O} \cap \Omega_2$, respectively, are well-defined for any open subset $\mathcal{O}$ of $\Omega$ such that $S \subset \mathcal{O}$.
Also, they satisfy the formula
\begin{equation*}
TV_{\Omega} (u) = TV_{\Omega_1} (u) + TV_{\Omega_2} (u) + \int_{\Gamma} |\gamma_1 u - \gamma_2 u | \,ds .
\end{equation*}
Set $\tOmega_1 = \Omega_1 \cup S$ and $\tOmega_2 = \Omega_2$.
Then, $\{ \tOmega_s \}_{s=1}^2$ forms an overlapping domain decomposition of $\Omega$, i.e., $\tGamma = \tOmega_1 \cap \tOmega_2$ has positive measure.
We define local energy functionals $E_s$:~$\tOmega_s \rightarrow \mathbb{R}$ as follows:
\begin{eqnarray*}
E_1 (\tu_1) &=& \alpha \int_{\Omega_1} \tu_1 g \,dx + \chi_{\left\{ 0 \leq \cdot \leq 1 \right\}} (\tu_1) + TV_{\Omega_1} (\tu_1) + \int_{\Gamma} |\gamma_1 \tu_1 - \gamma_2 \tu_1 | \,ds , \\
E_2 (\tu_2) &=& \alpha \int_{\Omega_2} \tu_2 g \,dx + \chi_{\left\{ 0 \leq \cdot \leq 1 \right\}} (\tu_2) + TV_{\Omega_2} (\tu_2).
\end{eqnarray*}
Consider the following constrained minimization problem:
\begin{equation}
\label{CCV_DD}
\min_{\substack{\tu_s \in BV(\tOmega_s) \\ s=1,2}} \sum_{s=1}^2 E_s (\tu_s) \hspace{0.3cm} \textrm{subject to } \tu_1 = \tu_2 \textrm{ on } \tGamma .
\end{equation}
Then, we have the following equivalence theorem.

\begin{theorem}
\label{Thm:cont}
Let $(\tu_1^*, \tu_2^* ) \in BV(\tOmega_1) \times BV(\tOmega_2)$ be a solution of~\eqref{CCV_DD}.
Then, $u^* \in BV(\Omega)$ defined by
$$
u^*(x) = \begin{cases} \tu_1^* (x) & \textrm{ if } x \in \tOmega_1 , \\ \tu_2^* (x) & \textrm{ if } x \in \Omega \setminus \tOmega_1 \end{cases}
$$
is a solution of~\eqref{CCV_re}.
Conversely, if $u^* \in BV(\Omega)$ is a solution of~\eqref{CCV_re}, then $(\tu_1^* , \tu_2^*) = (u^*|_{\tOmega_1} , u^*|_{\tOmega_2}) \in BV(\tOmega_1) \times BV(\tOmega_2)$ is a solution of~\eqref{CCV_DD}.
\end{theorem}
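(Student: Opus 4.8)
The plan is to exhibit a value-preserving bijection between the feasible set of~\eqref{CCV_DD} and $BV(\Omega)$; once this is in place, the correspondence of minimizers asserted in Theorem~\ref{Thm:cont} is immediate. Let $E$ denote the functional minimized in~\eqref{CCV_re}. I would introduce the \emph{gluing map} $G$, sending a feasible pair $(\tu_1,\tu_2)$ --- i.e., $(\tu_1,\tu_2)\in BV(\tOmega_1)\times BV(\tOmega_2)$ with $\tu_1=\tu_2$ on $\tGamma$ --- to the function $u^* = G(\tu_1,\tu_2)$ defined in the statement, and the \emph{restriction map} $\rho\colon u\mapsto (u|_{\tOmega_1}, u|_{\tOmega_2})$. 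The first task is to check that these are well defined and mutually inverse. That $\rho(u)$ is a feasible pair is clear (its two components agree with $u$, hence with each other, on $\tGamma$), and $G\circ\rho=\mathrm{id}$, $\rho\circ G=\mathrm{id}$ follow by unwinding definitions, using $u^*=\tu_1$ on $\tOmega_1$, $u^*=\tu_2$ on $\Omega\setminus\tOmega_1$, and $\tu_1=\tu_2$ on the overlap. The one point needing care here is that $G(\tu_1,\tu_2)\in BV(\Omega)$: since $\{\tOmega_1,\tOmega_2\}$ is an open cover of $\Omega$ and the two local functions coincide on $\tGamma=\tOmega_1\cap\tOmega_2$, $u^*$ agrees near every point of $\Omega$ with a $BV$ function, hence $u^*\in BV_{\mathrm{loc}}(\Omega)$; combining this with finiteness of $TV_{\Omega_1}(\tu_1)$, $TV_{\Omega_2}(\tu_2)$ and the trace decomposition formula for $TV_\Omega$ recalled above yields $|Du^*|(\Omega)<\infty$.

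Next I would prove the identity $E(u^*)=E_1(\tu_1)+E_2(\tu_2)$ for every feasible pair. The fidelity and indicator terms split term by term, since $u^*=\tu_1$ on $\tOmega_1\supset\Omega_1$ and $u^*=\tu_2$ on $\Omega_2$; this gives $\alpha\intO u^* g\,dx = \alpha\int_{\Omega_1}\tu_1 g\,dx + \alpha\int_{\Omega_2}\tu_2 g\,dx$ and, read in $\bar{\mathbb{R}}$, $\chi_{\{0\le\cdot\le1\}}(u^*)=\chi_{\{0\le\cdot\le1\}}(\tu_1)+\chi_{\{0\le\cdot\le1\}}(\tu_2)$. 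For the total variation I would apply the decomposition formula to $u^*$ on the \emph{nonoverlapping} pair $\{\Omega_1,\Omega_2\}$: $TV_\Omega(u^*)=TV_{\Omega_1}(u^*)+TV_{\Omega_2}(u^*)+\int_\Gamma|\gamma_1 u^*-\gamma_2 u^*|\,ds$. By locality of the total variation and of the trace, $TV_{\Omega_1}(u^*)=TV_{\Omega_1}(\tu_1)$ with $\gamma_1 u^*=\gamma_1\tu_1$, and $TV_{\Omega_2}(u^*)=TV_{\Omega_2}(\tu_2)$ with $\gamma_2 u^*=\gamma_2\tu_2$. The decisive step is to identify $\gamma_2\tu_2=\gamma_2\tu_1$, which holds because $\tu_1=\tu_2$ on $\tGamma$ and $\tGamma$ is a one-sided neighborhood of $\Gamma$ inside $\Omega_2$, so the two traces taken from the $\Omega_2$ side along $\Gamma$ coincide. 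Substituting turns the interface integral into $\int_\Gamma|\gamma_1\tu_1-\gamma_2\tu_1|\,ds$, which is exactly the term appearing in $E_1$; collecting the three contributions gives $E(u^*)=E_1(\tu_1)+E_2(\tu_2)$. Running the same computation in the other direction on $\rho(u)$ gives $E_1(u|_{\tOmega_1})+E_2(u|_{\tOmega_2})=E(u)$ for every $u\in BV(\Omega)$.

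Finally I would conclude: if $(\tu_1^*,\tu_2^*)$ solves~\eqref{CCV_DD}, then for any $u\in BV(\Omega)$ we have $E(G(\tu_1^*,\tu_2^*))=E_1(\tu_1^*)+E_2(\tu_2^*)\le E_1(u|_{\tOmega_1})+E_2(u|_{\tOmega_2})=E(u)$, so $u^*=G(\tu_1^*,\tu_2^*)$ solves~\eqref{CCV_re}; the converse is the symmetric statement obtained by applying $\rho$ to an optimal $u^*$. I expect the main obstacle to be the $BV$-gluing and trace bookkeeping of the second paragraph --- chiefly, verifying that no singular part of $Du^*$ is created along the inner boundary $\partial\tOmega_1\cap\Omega$, and that the one-sided traces along $\Gamma$ do not depend on which of the locally equal local functions is used. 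This is precisely where it matters that $\tGamma$ has positive Lebesgue measure rather than being a lower-dimensional interface, which is the whole point of the overlapping reformulation.
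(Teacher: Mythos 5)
Your proposal is correct and follows essentially the same route as the paper: both rest on the value-preserving correspondence $E(u)=E_1(u|_{\tOmega_1})+E_2(u|_{\tOmega_2})$ between glued/restricted functions, obtained from the trace decomposition $TV_{\Omega}(u)=TV_{\Omega_1}(u)+TV_{\Omega_2}(u)+\int_{\Gamma}|\gamma_1 u-\gamma_2 u|\,ds$, after which the transfer of minimizers in both directions is immediate. You merely make explicit the gluing/restriction bijection and the trace identification $\gamma_2\tu_2=\gamma_2\tu_1$ on the overlap, which the paper's proof leaves implicit.
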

\begin{proof}
First, suppose that $(\tu_1^*, \tu_2^* )$ is a solution of~\eqref{CCV_DD}.
For any $u \in BV(\Omega)$ with $0 \leq u \leq 1$, we have
\begin{equation*} \begin{split}
\alpha \intO ug \,dx + TV_{\Omega}(u) 
&= E_1 (u|_{\tOmega_1}) + E_2 (u|_{\tOmega_2}) \\
&\geq E_1 (\tu_1^*) + E_2 (\tu_2^*) 
\,\,= \alpha \intO u^* g \,dx + TV_{\Omega}(u^*).
\end{split} \end{equation*}
Hence, $u^*$ minimizes~\eqref{CCV_re}.

Conversely, we assume that $u^* \in BV(\Omega)$ is a solution of~\eqref{CCV_re} and set $(\tu_1^* , \tu_2^*) = (u^*|_{\tOmega_1} , u^*|_{\tOmega_2})$.
Take any $(\tu_1 , \tu_2) \in BV(\tOmega_1) \times BV(\tOmega_2)$ such that $0 \leq \tu_1 \leq 1$, $0 \leq \tu_2 \leq 1$, and $\tu_1 = \tu_2$ on $\tGamma$.
Let
$$
u(x) = \begin{cases} \tu_1 (x) & \textrm{ if } x \in \tOmega_1 , \\ \tu_2 (x) & \textrm{ if } x \in \Omega \setminus \tOmega_1 . \end{cases}
$$
Then we have
\begin{equation*} \begin{split}
E_1 (\tu_1) + E_2 (\tu_2) 
&= \alpha \intO ug \,dx + TV_{\Omega} (u) \\
&\geq \alpha \intO u^* g \,dx + TV_{\Omega} (u^*)
\,\,= E_1 (\tu_1^*) + E_2 (\tu_2^*).
\end{split} \end{equation*}
Therefore, $(\tu_1^* , \tu_2^*)$ is a solution of~\eqref{CCV_DD}.
\end{proof}

In conclusion, it is more appropriate to classify the proposed DDM in~\cite{DCT:2016} as an overlapping one instead of a nonoverlapping one.



\end{document}